\theoremstyle{plain}
\newtheorem{thm}{Theorem}[section]
\newtheorem{lem}[thm]{Lemma}
\newtheorem{prop}[thm]{Proposition}
\newtheorem{rmk}[thm]{Remark}
\newtheorem{ex}[thm]{Example}
\def\G{\mathscr{G}}
\def\P{\mathscr{P}}
\def\S{\mathscr{S}}
\def\T{\mathscr{T}}
\def\d{\mathrm{d}}
\def\l{\mathrm{L}}
\def\r{\mathrm{R}}
\def\Cset{\mathbb{C}}
\def\Eset{\mathbb{E}}
\def\Kset{\mathbb{K}}
\def\Lset{\mathbb{L}}
\def\Nset{\mathbb{N}}
\def\Rset{\mathbb{R}}
\def\Zset{\mathbb{Z}}
\def\GL{\mathrm{GL}}
\def\SL{\mathrm{SL}}
\def\gl{\mathrm{gl}}
\def\id{\mathrm{id}}
\def\Im{\mathrm{Im}\,}
\def\Re{\mathrm{Re}\,}
\def\epsilon{\varepsilon}
\DeclareMathOperator{\diag}{diag}
\DeclareMathOperator{\sech}{sech}
\def\theequation{\arabic{section}.\arabic{equation}}
\begin{document}


\title[Solvability of the KdV equation by quadrature]
{Solvability of the Korteweg-de Vries equation
 under meromorphic initial conditions by quadrature}

\author{Kazuyuki Yagasaki}

\address{Department of Applied Mathematics and Physics, Graduate School of Informatics,
Kyoto University, Yoshida-Honmachi, Sakyo-ku, Kyoto 606-8501, JAPAN}
\email{yagasaki@amp.i.kyoto-u.ac.jp}

\date{\today}
\subjclass[2020]{35Q53, 37K15, 34M03, 34M15, 34M35, 34M40, 35P25}
\keywords{Solvability by quadrature; Korteweg-de Vries equation; Schr\"odinger equation;
 inverse scattering transform; differential Galois theory; meromorphic initial condition;
 reflectionless potential}

\begin{abstract}
We study the solvability of the Korteweg-de Vries equation
 under meromorphic initial conditions by quadrature
 when the inverse scattering transform (IST) is applied.
It is a key to solve the Schr\"odinger equation appearing in the Lax pair
 in application of the IST.
We show that the Schr\"odinger equation is always integrable
 in the sense of differential Galois theory, i.e., solvable by quadrature,
 if and only if the meromporphic potential is reflectionless,
 under the condition that  the potential is absolutely integrable on $\Rset\setminus(-R_0,R_0)$ for some $R_0>0$.
This statement was previously proved to be true by the author
 for a limited class of potentials.
We also show that the Schr\"odinger equation is not integrable in this sense
 for rational potentials that decay at infinity but do not satisfy  the weak condition.
\end{abstract}
\maketitle


\section{Introduction}

In this paper we study the solvability of the Korteweg-de Vries (KdV) equation
\begin{equation}
u_t+6uu_x+u_{xxx}=0
\label{eqn:KdV}
\end{equation}
under meromorphic initial conditions by quadrature,
 where the subscripts represent differentiation
 with respect to the independent variables.
Here the solution to \eqref{eqn:KdV} are required to satisfy
\[
\lim_{x\to\pm\infty}u(x,t)=0.
\]
We also pay special attention to the case
 in which the initial conditions are given by rational functions.

As well-known, the initial value problems (IVPs) of the KdV equation \eqref{eqn:KdV}
 can be solved by the inverse scattering transform (IST)
 (see, e.g., Chapter~9 of \cite{A11}).
In the method, a pair of differential equations 
\begin{align}
&
\varphi_{xx}+u(x,t)\varphi=\lambda\varphi,
\label{eqn:LP1}\\
&
\varphi_t=-4\varphi_{xxx}-6u(x,t)\varphi_x+(\mu-3u_x(x,t))\varphi,
\label{eqn:LP2}
\end{align}
which are called the \emph{Lax pair}, are analyzed to obtain the solutions to the IVP,
 where $\lambda,\mu\in\Cset$ are constants.
In particular, we need to solve \eqref{eqn:LP1}
 for any $\lambda\in\Cset^\ast=\Cset\setminus\{0\}$
 and obtain particular solutions called the \emph{Jost solutions}, which satisfy
\begin{equation}
\begin{split}
&
\phi(x;k)\sim e^{-ikx}\quad\mbox{as $x\to-\infty$},\\
&
\psi(x;k)\sim e^{ikx}\quad\mbox{as $x\to+\infty$},
\end{split}
\label{eqn:bc}
\end{equation}
where $\lambda=-k^2\neq 0$.
Thus, when the IST is applied, the IVP of \eqref{eqn:KdV} is solvable by quadrature
 only if so is \eqref{eqn:LP1} for any $\lambda\in\Cset$.
We discuss the solvability of \eqref{eqn:KdV} by quadrature in this meaning,
 reducing it to that of the Schr\"odinger equation
\begin{equation}
v_{xx}+(k^2+u(x))v=0,
\label{eqn:ZS0}
\end{equation}
or as a first-order system
\begin{equation}
w_x=
\begin{pmatrix}
0 & 1\\
-k^2-u(x) & 0
\end{pmatrix}w,\quad
w=
\begin{pmatrix}
v\\
\d v/\d x
\end{pmatrix}
\in\Cset^2,
\label{eqn:ZS}
\end{equation}
i.e., its integrability in the sense of differential Galois theory \cite{CH11,PS03}.
See also Section~3 of \cite{Y23}
 for a quick review of a necessary part of the theory.

Such integrability of \eqref{eqn:ZS0} (and equivalently \eqref{eqn:ZS})
 was recently discussed in \cite{Y23} under the following condition:
\begin{enumerate}
\setlength{\leftskip}{-3.8mm}
\item[\bf(A${}_0$)]
The potential $u(x)$ is holomorphic in a neighborhood $U$ of $\Rset$ in $\Cset$.
Moreover, there exist holomorphic functions $u_\pm:U_0\to\Cset$
 such that $u_\pm(0)=0$ and
\[
u(x)=u_\pm(e^{\mp\lambda_\pm x})
\]
for $|\Re x|$ sufficiently large,
 where $U_0$ is a neighborhood of the origin in $\Cset$,
 $\lambda_\pm\in\Cset$ are some constants with $\Re\lambda_\pm>0$,
 and the upper or lower signs are taken simultaneously
 depending on whether $\Re x>0$ or $\Re x<0$.
\end{enumerate}
In particular, $u(x)$ tends to zero exponentially as $x\to\pm\infty$
 on $\Rset$, so that $u\in L^1(\Rset)$, if it satisfies condition~(A${}_0$).
Condition~(A${}_0$) is a little restrictive,
 but it is satisfied by several wide classes of functions.
For example, if $u(x)$ is a rational function of $e^{\lambda x}$
 for some $\lambda\in\Cset$ with $\Re\lambda>0$,
 has no singularity on $\Rset$, and $u(x)\to 0$ as $x\to\pm\infty$,
 then condition~(A${}_0$) holds.
See Section~1 of \cite{Y23} for another example of $u(x)$ satisfying condition~(A${}_0$).

Define the \emph{scattering coefficients} $a(k)$ and $b(k)$ for \eqref{eqn:ZS0} as
\begin{equation}
\phi(x;k)=a(k)\psi(x;-k)+b(k)\psi(x;k).
\label{eqn:ab+}
\end{equation}
When $a(k)\neq 0$, the constant $\rho(k)=b(k)/a(k)$
  is defined and called the \emph{reflection coefficient}.
The potential $u(x)$ is called \emph{reflectionless}
 if $b(k)=0$ for any $k\in\Rset^\ast:=\Rset\setminus\{0\}$.
The following result was proved in \cite{Y23}.

\begin{thm}
\label{thm:Y}
Suppose that condition~{\rm(A${}_0$)} holds.
If the potential $u(x)$ is reflectionless,
 then Eq.~\eqref{eqn:ZS0} is solvable by quadrature
 for any $k\in\Cset^\ast$.
Conversely, 
 if Eq.~\eqref{eqn:ZS0} is solvable by quadrature for any $k\in\Rset^\ast$,
 then the potential $u(x)$ is reflectionless.
 \end{thm}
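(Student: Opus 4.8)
The plan is to recast \eqref{eqn:ZS0} as a linear equation over the differential field generated by the potential and to read off the differential Galois group from its monodromy. In the model case where $u$ is rational in $z=e^{\lambda x}$ (so $\lambda_+=\lambda_-=\lambda$), the substitution $z=e^{\lambda x}$ turns \eqref{eqn:ZS0} into a Fuchsian equation on $\Pset^1$ whose singular points are $z=0$ and $z=\infty$, corresponding to $x\to-\infty$ and $x\to+\infty$, together with the images of the poles of $u$; by the density theorem the Galois group $G\subseteq\SL(2,\Cset)$ is then the Zariski closure of the monodromy. At $z=0$ the indicial equation is $\lambda^2 r^2+k^2=0$, so the exponents are $r=\pm ik/\lambda$, the local monodromy $M_0$ is diagonalizable with eigenvalues $e^{\pm2\pi k/\lambda}$, and, since $\phi(x;\pm k)\sim e^{\mp ikx}$ as $x\to-\infty$, its eigenlines are spanned by the Jost solutions $\phi(\cdot;k)$ and $\phi(\cdot;-k)$. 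Likewise the monodromy $M_\infty$ at $z=\infty$ has eigenlines spanned by $\psi(\cdot;k)$ and $\psi(\cdot;-k)$, and the scattering relation \eqref{eqn:ab+} (together with its $k\mapsto-k$ image) is exactly the change of basis between these two diagonalizing frames.

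For the forward implication I would exhibit explicit Liouvillian solutions. When $u$ is reflectionless it is, under condition~(A${}_0$), a (complex) $N$-soliton potential obtained from $u\equiv0$ by a finite chain of Darboux transformations at the bound-state energies; equivalently, every finite pole of $u$ is an apparent singularity of \eqref{eqn:ZS0} for all $k$. In that case $b\equiv0$ gives $\phi(x;k)=a(k)\psi(x;-k)$, and this solution has the closed form $e^{-ikx}$ times a rational function of the exponentials, so its logarithmic derivative lies in the base field. Thus $\phi(\cdot;k)$ spans a $G$-invariant line, $G$ is reducible, and \eqref{eqn:ZS0} is solvable by quadrature; since the Darboux data are fixed and do not involve $k$, this persists for every $k\in\Cset^\ast$.

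For the converse I argue by the contrapositive, combining the monodromy picture with analyticity in $k$. Fix a real $k\neq0$. Because $\Re\lambda>0$, the eigenvalues $e^{\pm2\pi k/\lambda}$ of $M_0$ (and of $M_\infty$) have modulus $e^{\pm2\pi k\Re\lambda/|\lambda|^2}\neq1$, so these monodromies have infinite order; in particular $G$ is infinite, which rules out the finite primitive case of the Kovacic trichotomy, and this is precisely where the restriction to real $k$ enters. Solvability by quadrature then forces $G$ to be reducible or imprimitive, i.e.\ to preserve a single line or an unordered pair of lines. Since $M_0$ has distinct eigenvalues, any preserved line must be one of its eigenlines, and comparison with \eqref{eqn:ab+} shows that each coincidence of an eigenline of $M_0$ with one of $M_\infty$ forces one of $a(\pm k),b(\pm k)$ to vanish; the imprimitive case forces $b(k)=b(-k)=0$, while the reducible case forces at least one of $a(k),a(-k),b(k),b(-k)$ to be zero. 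Now suppose $u$ is not reflectionless, so $b\not\equiv0$; since $a(k)\to1$ as $|k|\to\infty$ and $a,b$ are analytic in $k$, the real zeros of $a(\pm k)$ and $b(\pm k)$ form a discrete set. At any real $k$ outside this set all four quantities are nonzero, so $G$ is neither reducible, imprimitive, nor finite and hence equals $\SL(2,\Cset)$, whence \eqref{eqn:ZS0} is \emph{not} solvable by quadrature there. This contradicts solvability for all $k\in\Rset^\ast$, and therefore $b\equiv0$.

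The hard part is the converse, and within it two points demand the most care. First, the clean Fuchsian-on-$\Pset^1$ reduction and the identification of $G$ with the monodromy closure must be replaced, for potentials satisfying only condition~(A${}_0$) (where $u_\pm$ need not be rational and $\lambda_+\neq\lambda_-$ is allowed), by a local analysis at the two ends feeding an appropriate density theorem; one must verify that the exponents $\pm ik/\lambda_\pm$, the resulting monodromy eigenvalues of modulus $\neq1$, and the identification of their eigenlines with the Jost solutions all survive in this generality. Second, the eigenline bookkeeping tying the reducible and imprimitive cases to the vanishing of $a,b$ must be made precise, including the point that a zero of $a(\pm k)$ can yield a $G$-invariant line with $b\neq0$ at isolated $k$; it is exactly the analyticity argument, using the hypothesis at every real $k$ rather than at a single one, that prevents such configurations from persisting and forces $b\equiv0$.
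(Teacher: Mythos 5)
Your overall skeleton---local solution structure at the two spatial infinities, the scattering matrix as the connection between the two local frames, the classification of algebraic subgroups of $\SL(2,\Cset)$ to convert solvability into invariant lines, and analyticity in $k$ to upgrade pointwise vanishing to $b\equiv 0$---is indeed the architecture of the actual proof (the theorem is proved in \cite{Y23}; the present paper reproves analogues under condition~(A${}_1$) in Sections~3--4, with Stokes matrices and Ramis's theorem playing the role of your monodromies). But there are genuine gaps. In the forward direction, your assertion that a reflectionless potential satisfying (A${}_0$) is an $N$-soliton potential obtained from $u\equiv 0$ by a finite chain of Darboux transformations---equivalently, that $\phi(x;k)$ equals $e^{-ikx}$ times a rational function of exponentials with logarithmic derivative in the base field---is precisely the hard content of this implication, and you assume it rather than prove it. For complex-valued potentials in this class it is not a citable classical fact; establishing it is exactly what the projection-operator/residue argument does (cf.\ the proof of Theorem~\ref{thm:main1}: formula \eqref{eqn:MN1d} and the linear system for the $N_j^r$), including the case of zeros of $a$ of multiplicity greater than one (cf.\ Example~\ref{ex:3a}, a negaton), where the naive simple-Darboux-chain picture does not directly apply. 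As written, your forward direction is circular.

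In the converse there are two problems. First, the reduction ``Fuchsian on $\Pset^1$ plus Schlesinger density'' is incorrect even in your model case: if $u$, as a rational function of $z=e^{\lambda x}$, has a pole of order $\ge 3$, that point is an \emph{irregular} singular point, the equation is not Fuchsian, and the Galois group is not the Zariski closure of the monodromy group alone---one needs exponential tori and Stokes data via Ramis's density theorem, i.e., the machinery Section~4 of the paper deploys at $x=\infty$. For general (A${}_0$) potentials ($u_\pm$ not rational, $\lambda_+\neq\lambda_-$) no reduction to an equation on $\Pset^1$ exists at all; you flag the required local analysis and density theorem as ``points that demand the most care'' but do not supply them, and they are the substance of the converse, not a routine verification. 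Second, your endgame needs more than ``the real zeros of $b(\pm k)$ form a discrete set'': since $\Rset^\ast$ is disconnected, $b\not\equiv 0$ is a priori compatible with $b$ vanishing identically on one half-line, in which case at every real $k$ one of your four quantities vanishes and your contrapositive produces no contradiction anywhere. Ruling this out requires knowing that $b$ is analytic on a \emph{connected} punctured neighborhood of $k=0$ in $\Cset^\ast$ and applying the identity theorem there, which is how the paper argues in the proof of Theorem~\ref{thm:main2}. (A smaller slip: in the imprimitive case the pair of invariant lines can also be matched so that $a(k)=a(-k)=0$ with $b(k)b(-k)=-1$, consistent with \eqref{eqn:2b}, rather than $b(k)=b(-k)=0$; this does not damage your final argument, but your bookkeeping misses it.)
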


\begin{rmk}
A similar statement was proved
 for more general two-dimensional systems called
  the \emph{Zakharov-Shabat $($ZS$)$ systems},
 which appear in application of the IST for other integrable partial differential equations $($PDEs$)$
 including the nonlinear Schr\"odinger equation,
 modified KdV equation, sine-Gordon equation and sinh-Gordon equation,
 in {\rm\cite{Y23}}.
\end{rmk}

In this paper we consider the case in which $u(x)$ is meromorphic,
 and extend the result of \cite{Y23}. 
More precisely, we assume the following.

\begin{enumerate}
\setlength{\leftskip}{-3.8mm}
\item[\bf(A${}_1$)]
The potential $u(x)$ is meromorphic in a neighborhood of $\Rset$ in $\Cset$ and
\begin{equation}
\int_{\Rset\setminus(-R_0,R_0)}|u(x)|\d x<\infty\quad
\mbox{for some $R_0>0$.}
\label{eqn:A1}
\end{equation}
\end{enumerate}

When Eq.~\eqref{eqn:A1} holds, we have
\begin{equation}
\lim_{x\to\pm\infty}u(x)=0,
\label{eqn:A1a}
\end{equation}
and by Theorem~8.1 in Section~3.8 of \cite{CL55}
 there exist the Jost solutions $\phi(x;k),\psi(x;k)$ satisfying \eqref{eqn:bc}
 for $k\in\Cset^\ast$. 
See also Section~2 of \cite{Y23}.
On the other hand, meromorphic functions that are not analytic have singular points.
Singular solitons called \emph{positons}, \emph{negatons} and \emph{complexitons}
 in the KdV equation \eqref{eqn:KdV} have attracted much attention and been studied
 (see, e.g., \cite{KP99,M02,MY04,Mat02,MS91,RSK96}).
Moreover, rational solitons, which have singularities, in the KdV equation \eqref{eqn:KdV}
 were studied in \cite{AM78,AMM77,C06b,JMSZ19}.
In particular, Jim\'enez et al. \cite{JMSZ19}
 discussed the integrability of the Schr\"odinger equation \eqref{eqn:ZS0}
 in the sense of differential Galois theory
 for the Adler-Moser rational potentials \cite{AM78,AMM77}, for example,
\begin{equation}
u(x)=\frac{2}{x^2},\
\frac{6x(x^3-6\tau)}{(x^3+3\tau)^2},\
\frac{6x(2x^9+675\tau^2 x^3+1350\tau^3)}{(x^6+15\tau x^3-45\tau^2)^2},\ \ldots,
\label{eqn:AM}
\end{equation}
where $\tau\in\Cset$ is any constant, which correspond to rational solitons in the KdV equation \eqref{eqn:KdV}.
See \cite{AM78,JMSZ19} for more details of the Adler-Moser rational potentials
 including their precise definition. 
They showed that its differential Galois group is trivial for $k=0$
 and diagonalizable for $k\neq 0$.
This result implies that Eq.~\eqref{eqn:ZS0} is solvable by quadrature
 for the Adler-Moser rational potentials.
We also notice that Acosta-Hum\'anez et al. \cite{AMW11}
 and Bl\'azquez-Sanz and Yagasaki \cite{BY12}
 studied the integrability of the Schr\"odinger equation \eqref{eqn:ZS0}
 for rational potentials and potentials satisfying condition~{\rm(A${}_0$)},
 respectively, in the sense of differential Galois theory.
 
Let $\Cset_\pm=\{k\in\Cset\mid\pm\mathrm{Im}\,k>0\}$.
Our main results are stated as follows.

\begin{thm}
\label{thm:main1}
Suppose that condition~{\rm(A${}_1$)} holds and $a(k)$ has a zero in $\Cset_+$.
If $b(k)=0$ for any $k\in\Rset^\ast$,
 then Eq.~\eqref{eqn:ZS0} is solvable by quadrature for any $k\in\Cset^\ast$.
\end{thm}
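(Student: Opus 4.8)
The plan is to reduce \eqref{eqn:ZS0} to the free Schr\"odinger equation by stripping off its bound states one at a time through \emph{Darboux transformations}, and then to transport solvability by quadrature back up the resulting chain. Recall that solvability by quadrature of \eqref{eqn:ZS0} for a given $k$ is equivalent to the existence of a fundamental system lying in a Liouvillian extension of the base field of meromorphic functions, and that a first-order intertwining operator whose coefficient already lies in that base field transports this property in both directions. First I would use the hypothesis to produce a seed: since $a(i\kappa_0)=0$ for some $\kappa_0>0$, the Jost solutions $\phi(\cdot;i\kappa_0)$ and $\psi(\cdot;i\kappa_0)$ are linearly dependent, so $\chi_0(x):=\psi(x;i\kappa_0)$ solves \eqref{eqn:ZS0} at $k=i\kappa_0$ and decays as $x\to\pm\infty$; being a solution of a linear equation with meromorphic coefficient, $\chi_0$ is meromorphic near $\Rset$.

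Next I would form the Darboux transformation with seed $\chi_0$. For every $k\in\Cset^\ast$ simultaneously, the first-order operator $T_0=\partial_x-(\log\chi_0)_x$ sends a solution $v$ of \eqref{eqn:ZS0} to a solution $T_0[v]$ of the Schr\"odinger equation with potential $\tilde u=u-2(\log\chi_0)_{xx}$, and the inverse correspondence is realized by a companion first-order operator. The coefficient $(\log\chi_0)_x$ is a meromorphic function, so these operators lie over the base field and therefore preserve solvability by quadrature in both directions. I would then verify that $\tilde u$ is again meromorphic, satisfies condition~{\rm(A${}_1$)}, remains reflectionless, and has scattering coefficient $\tilde a(k)=\dfrac{k+i\kappa_0}{k-i\kappa_0}\,a(k)$, so that $\tilde a$ is analytic and nonzero at $i\kappa_0$ and has exactly one fewer zero in $\Cset_+$ than $a$.

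Iterating this construction, the number of zeros of the scattering coefficient in $\Cset_+$ strictly decreases, so after finitely many steps I reach a reflectionless potential whose scattering coefficient has no zero in $\Cset_+$. For such a potential the scattering data is trivial, so by injectivity of the inverse scattering transform the potential vanishes identically and the associated equation $v_{xx}+k^2v=0$ has the Liouvillian fundamental system $\{e^{ikx},e^{-ikx}\}$ for every $k\in\Cset^\ast$. Transporting this through the finite chain of Darboux intertwiners, each of which preserves solvability by quadrature, yields a Liouvillian fundamental system of \eqref{eqn:ZS0} for every $k\in\Cset^\ast$, which is the assertion.

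The hard part, and the place where the weakness of condition~{\rm(A${}_1$)} relative to {\rm(A${}_0$)} is felt, is to make the three structural claims underlying the induction rigorous without exponential decay of the potential. Concretely, I must show that the number of zeros of $a(k)$ in $\Cset_+$ is finite so that the induction terminates; here the poles of $u$ on $(-R_0,R_0)$ obstruct a direct appeal to the classical $L^1$ theory, and instead I would exploit the analyticity of $a$ in $\Cset_+$, its normalization $a(k)\to1$ as $|k|\to\infty$, and the modulus constraint $|a(k)|=1$ on $\Rset^\ast$ that reflectionlessness imposes. I must also control the singularities produced when a seed $\chi_j$ vanishes, confirming both that $(\log\chi_j)_x$ stays in the base field so that the intertwiner preserves solvability, and that condition~{\rm(A${}_1$)} and reflectionlessness genuinely persist down the chain; and I must justify that a reflectionless potential with no bound state is trivial in the present meromorphic class. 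The hypothesis that $a(k)$ has a zero in $\Cset_+$ enters precisely to guarantee that the chain is nonempty, anchoring the reduction to the free equation.
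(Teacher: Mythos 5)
Your overall strategy---a dressing chain of Darboux transformations anchored at the free equation---cannot work under condition~(A${}_1$), and the failure is exactly at the step you call terminal. You claim that once all zeros of $a$ in $\Cset_+$ have been stripped off, the remaining reflectionless potential has trivial scattering data and hence vanishes ``by injectivity of the inverse scattering transform.'' That injectivity is a theorem of Marchenko theory for potentials in $L^1(\Rset)$ (with a moment condition); it is false in the class allowed by (A${}_1$), where $u$ may have poles in $(-R_0,R_0)$. The paper's own Remark~\ref{rmk:1b} supplies the counterexamples: the Adler--Moser rational potentials \eqref{eqn:AM}, e.g.\ $u(x)=2/x^2$, satisfy (A${}_1$) and have $a(k)\equiv 1$, $b(k)\equiv 0$, yet $u\not\equiv 0$. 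Consequently your induction need not bottom out at $u\equiv 0$: if, for instance, $u$ is an Adler--Moser potential dressed with one soliton (so that $a$ has exactly one zero and $b\equiv 0$), one Darboux step returns you to the Adler--Moser potential, and nothing in your proposal establishes solvability by quadrature for it---that is precisely the case excluded by the hypothesis that $a$ have a zero, and it required the separate work of \cite{JMSZ19}. So your chain terminates at an unidentified reflectionless potential with empty bound-state spectrum, about which the proposal can conclude nothing.

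Two intermediate steps also fail as written. First, ``being a solution of a linear equation with meromorphic coefficient, $\chi_0$ is meromorphic'' is a non sequitur: solutions of \eqref{eqn:ZS0} generically acquire branch points or logarithmic terms at the poles of $u$, so neither $(\log\chi_0)_x$ nor $\tilde u=u-2(\log\chi_0)_{xx}$ need remain meromorphic, and (A${}_1$) need not persist down the chain. (Meromorphy of the Jost solutions is true under the theorem's hypotheses, but as a consequence of the theorem's proof, not as an a priori fact.) Second, you assume the zeros of $a$ are simple and lie on the imaginary axis; under (A${}_1$) the potential may be complex and zeros may have higher multiplicity---Example~\ref{ex:3a} of the paper is a potential covered by the theorem with $a(k)=((k-i)/(k+i))^2$, a double zero at $k=i$---so single-seed Darboux steps and the factor $(k+i\kappa_0)/(k-i\kappa_0)$ do not suffice. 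The paper's proof avoids all of this: it never identifies a terminal potential, but instead divides \eqref{eqn:MN1b} by $a(k)$, applies the projection operator $\P^-$, and uses $b=0$ on $\Rset^\ast$ to collapse the resulting Riemann--Hilbert problem to a finite linear-algebraic system for the bound-state functions $N_j^r(x)$ (with $k$-derivatives handling multiplicities), whose solution exhibits $\psi(x;k)$ explicitly as a rational function of $x$ and $\{e^{ik_jx}\}_{j=1}^n$.
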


\begin{thm}
\label{thm:main2}
Suppose that condition~{\rm(A${}_1$)} holds
 and $u(x)$ is analytic in a neighborhood of $x=\infty$
 in the Riemann sphere $\Cset\cup\{\infty\}$.
If Eq.~\eqref{eqn:ZS0} is solvable by quadrature for any $k\in\Cset^\ast$,
 then $b(k)=0$ for any $k\in\Rset^\ast$.
\end{thm}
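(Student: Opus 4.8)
The plan is to argue by contraposition and to localize everything at the irregular singular point $x=\infty$. Suppose, contrary to the conclusion, that $b(k)\neq 0$ for some $k\in\Rset^\ast$. Writing \eqref{eqn:ZS0} in the normal form $v_{xx}=-(k^2+u(x))v$, its differential Galois group satisfies $G\subseteq\SL(2,\Cset)$ because the Wronskian is constant. By Kolchin's theorem the equation is solvable by quadrature if and only if the identity component $G^0$ is solvable, and since the only connected algebraic subgroup of $\SL(2,\Cset)$ that is not solvable is $\SL(2,\Cset)$ itself, this is equivalent to $G\neq\SL(2,\Cset)$. Hence it suffices to produce a single $k\in\Cset^\ast$ for which $G=\SL(2,\Cset)$: this contradicts the standing hypothesis that \eqref{eqn:ZS0} is solvable by quadrature for every $k\in\Cset^\ast$.

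First I would analyze the germ of \eqref{eqn:ZS} at $x=\infty$. Since $u$ is analytic near $x=\infty$ it has an expansion in $1/x$, whose constant term vanishes by \eqref{eqn:A1a} and whose $1/x$ coefficient vanishes because of the absolute integrability \eqref{eqn:A1}; thus $u(x)=O(1/x^2)$. Consequently $x=\infty$ is an irregular singular point of Poincar\'e rank $1$ with distinct formal exponential factors $e^{\pm ikx}$ and trivial formal monodromy. By the Ramis density theorem (see \cite{CH11,PS03}) the local differential Galois group $G_\infty$ is the Zariski closure of the group generated by the exponential torus $T=\{\,\mathrm{diag}(t,t^{-1}):t\in\Cset^\ast\,\}$, the formal monodromy, and the two Stokes matrices $S_+,S_-$ attached to the singular directions $\arg x=\pm\pi/2$. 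These are unipotent, one upper- and one lower-triangular, say $S_+=\bigl(\begin{smallmatrix}1&\beta_+\\0&1\end{smallmatrix}\bigr)$ and $S_-=\bigl(\begin{smallmatrix}1&0\\\beta_-&1\end{smallmatrix}\bigr)$. Because $G_\infty$ embeds into the global group $G$, it is enough to arrange $G_\infty=\SL(2,\Cset)$.

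The heart of the argument is the Stokes--scattering dictionary. I would match the Borel-summed sectorial solutions on the right- and left-half-plane sectors (which contain the anti-Stokes directions $\arg x=0$ and $\arg x=\pi$, i.e.\ $x\to+\infty$ and $x\to-\infty$) with the Jost solutions $\psi(x;\pm k)$ and $\phi(x;\pm k)$ of \eqref{eqn:bc}, and read off the Stokes multipliers from the connection across the imaginary axis. Writing the scattering relation \eqref{eqn:ab+} together with its counterpart for $\phi(x;-k)$ in the exponential basis yields the connection matrix $\bigl(\begin{smallmatrix}a(-k)&b(-k)\\b(k)&a(k)\end{smallmatrix}\bigr)$, and transporting it through the upper and lower half-planes identifies $\beta_+$ and $\beta_-$ with $b(-k)$ and $b(k)$ up to nonzero factors involving $a(\pm k)$; note that \eqref{eqn:ZS0} depends on $k$ only through $k^2$, so for a fixed equation both $b(k)$ and $b(-k)$ are genuinely available. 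Crucially, as $k$ ranges over $\Cset^\ast$ the leading coefficient at $\infty$ retains the distinct eigenvalues $\pm ik$, so the irregular type is stable and the Stokes multipliers $\beta_\pm(k)$ depend analytically on $k$; consequently $k\mapsto b(k)$ is analytic on $\Rset^\ast$. Since $b$ is not identically zero by assumption, neither is $k\mapsto b(k)b(-k)$, so its zeros on $\Rset^\ast$ are isolated, and one may choose $k_1\in\Rset^\ast$ with $b(k_1)\neq 0$ and $b(-k_1)\neq 0$ simultaneously; for this $k_1$ both Stokes multipliers $\beta_\pm(k_1)$ are nonzero.

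Finally, a Zariski-closed subgroup of $\SL(2,\Cset)$ that contains nontrivial upper- and lower-triangular unipotent elements must be all of $\SL(2,\Cset)$, for its Lie algebra then contains both root vectors and hence their bracket, i.e.\ all of $\mathfrak{sl}_2(\Cset)$. Thus $G_\infty=\SL(2,\Cset)$ for $k=k_1$, whence $G=\SL(2,\Cset)$ and \eqref{eqn:ZS0} is not solvable by quadrature at $k_1\in\Cset^\ast$, contradicting the hypothesis; therefore $b(k)=0$ for all $k\in\Rset^\ast$. The step I expect to be the main obstacle is the explicit dictionary of the third paragraph: establishing rigorously, via the Borel-summation/connection analysis of the sectorial solutions at $x=\infty$ together with the normalization $a(k)a(-k)-b(k)b(-k)=1$, that the off-diagonal Stokes multipliers are governed precisely by the reflection data $b(\pm k)$, so that nonvanishing reflection forces \emph{both} $S_\pm$ to be nontrivial rather than leaving $G_\infty$ inside a solvable Borel subgroup.
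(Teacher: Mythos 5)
Your localization at $x=\infty$ --- using (A${}_1$) to kill the $1/x$ term so that $u(x)=O(1/x^2)$, the rank-one exponentials $e^{\pm ikx}$, trivial formal monodromy, the exponential torus, and Ramis' density theorem --- is exactly the paper's starting point in Section~4. However, the proposal has two genuine gaps, and the first is structural: it cannot be repaired within your strategy.

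\textbf{The choice of $k_1$ fails.} You claim that $b\not\equiv 0$ forces $b(k)b(-k)\not\equiv 0$, so that some $k_1$ satisfies $b(k_1)\neq 0$ and $b(-k_1)\neq 0$. This is false: $\Rset^\ast$ has two connected components, $b$ is analytic only on each component separately, and for a genuinely complex potential (which (A${}_1$) permits) there is neither an identity-theorem link across $k=0$ nor the real-potential symmetry $b(-k)=\overline{b(k)}$ relating the two restrictions; complex potentials can be reflectionless from one side only. Concretely, take $u(x)=c/(x+i)^2$ with $\nu^2=\tfrac14-c$ and $\nu\notin\tfrac12+\Zset$. It satisfies every hypothesis of Theorem~\ref{thm:main2}; its solutions are $\sqrt{x+i}$ times Bessel functions of $k(x+i)$, and since the asymptotic expansion of $H^{(1)}_\nu$ is valid on a sector of opening $3\pi$ containing both ends of the real axis, the single solution $\sqrt{x+i}\,H^{(1)}_\nu(k(x+i))$ serves simultaneously as the Jost solution $\sim e^{ikx}$ at $+\infty$ and at $-\infty$; hence $b(k)=0$ for all $k<0$, while the standard Hankel connection formulas give $b(k)$ equal to a nonzero multiple of $\cos(\nu\pi)e^{-2k}$ for $k>0$. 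So $b\not\equiv 0$ yet $b(k)b(-k)\equiv 0$, and your $k_1$ does not exist. (This is not a counterexample to the theorem: the equation is Bessel's, hence not solvable by quadrature for such $\nu$.) Consequently your contrapositive can at best yield ``solvable for all $k$ implies $b(k)b(-k)\equiv 0$'', which is strictly weaker than the assertion. The paper's proof is built to avoid this: Proposition~\ref{prop:4a} extracts, for each single $k$ with $a(k)\neq 0$, the dichotomy $b(k)=0$ or $b(-k)=0$ from solvability at that one $k$ (via Proposition~\ref{prop:2b}: the Zariski closure of the torus together with the connection matrices must be of type (iii) or (v)), and only afterwards upgrades by analyticity; it never needs a $k$ at which both $b(k)$ and $b(-k)$ are nonzero, and your proposal contains no substitute for that final upgrading step.

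\textbf{The Stokes--scattering dictionary is not merely unproven; as stated it is false.} The genuine unipotent Stokes matrices compare multisummed sectorial solutions near $x=\infty$, i.e.\ they encode connections made through the upper, respectively lower, half-plane arcs near infinity, whereas the scattering relation \eqref{eqn:ab+} compares Jost solutions along the real axis. When $u$ has poles off $\Rset$ --- allowed under (A${}_1$) --- the two differ by monodromy around those poles. In the example above there is no pole in the upper half-plane, and there the Stokes multiplier is indeed a nonzero multiple of $b(k)$; but at the singular direction associated with the lower half-plane, which contains the pole $x=-i$, the Stokes multiplier is a nonzero multiple of $\cos(\nu\pi)$ even though $b(-k)=0$ for $k>0$. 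So neither direction of the claimed correspondence between $\beta_\pm$ and $b(\mp k)$ survives; a correct dictionary must incorporate the monodromy of the finite poles. Note also that your dictionary is in tension with the paper's own computation, in which the matrices attached to the two singular directions are the full, generally non-unipotent, matrices $S_\mp$ built from the scattering coefficients; the paper's argument does not use unipotency at all, only that the group generated by these matrices together with the torus cannot be solvable unless $b(k)$ or $b(-k)$ vanishes.
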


\begin{rmk}\
\label{rmk:1b}
\begin{itemize}
\setlength{\leftskip}{-1.6em}
\item[\rm(i)]
The scattering coefficient $a(k)$ may have no zero in $\Cset_+$
 when condition {\rm(A${}_1$)} holds and $b(k)=0$ for any $k\in\Rset^\ast$.
Actually, for the Adler-Moser rational potentials \eqref{eqn:AM},
 condition {\rm(A${}_1$)} holds
 but $a(k)=1$ and $b(k)=0$ for any $k\in\Cset^\ast$,
 as shown in {\rm\cite{JMSZ19}}.
In such a case, Theorem~$\ref{thm:main1}$ does not apply.
\item[\rm(ii)]
Statements similar to those of Theorems~$\ref{thm:main1}$ and $\ref{thm:main2}$
 are proven for the ZS systems in {\rm\cite{Y25}}.
\end{itemize}
\end{rmk}

Furthermore, we consider the case in which $u(x)$ is a rational function
 and satisfies \eqref{eqn:A1a} but does not \eqref{eqn:A1}.
 
\begin{thm}
\label{thm:main3}
Suppose that $u(x)$ is a rational function,
 and let $m_1$ and $m_2$ be the degrees of its numerator and denominator.
If $m_2-m_1=1$,
 then Eq.~\eqref{eqn:ZS0} is not solvable by quadrature
 for some $k\in\Rset^\ast$.
\end{thm}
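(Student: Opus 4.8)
The plan is to work directly with the differential Galois group of \eqref{eqn:ZS0}, written in normal form $v_{xx}=r(x)v$ with $r(x)=-(k^2+u(x))\in\Cset(x)$. Since $m_2-m_1=1$, the potential satisfies $u(x)=c/x+O(1/x^2)$ as $x\to\infty$, where $c$ is the ratio of the leading coefficients of the numerator and denominator, so $c\neq0$. Because the equation has no first-order term, its Galois group $G$ is a subgroup of $\SL(2,\Cset)$, and \eqref{eqn:ZS0} is solvable by quadrature precisely when $G\neq\SL(2,\Cset)$; by Kovacic's algorithm (see \cite{PS03}) this happens only if there is a nonzero solution $v$ whose logarithmic derivative $\omega=v_x/v$ is algebraic over $\Cset(x)$ of degree $d\in\{1,2,4,6,12\}$. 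I would prove that for all but countably many $k\in\Rset^\ast$ no such $\omega$ exists, so that $G=\SL(2,\Cset)$; in particular \eqref{eqn:ZS0} fails to be solvable by quadrature for some (indeed co-countably many) $k\in\Rset^\ast$, which is more than the theorem asks.

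The first step is the local analysis at $x=\infty$, which is an irregular singular point of Poincar\'e rank one because $r(\infty)=-k^2\neq0$. Inserting $\omega=\pm ik+\eta$ into the Riccati equation $\omega_x+\omega^2+k^2+u=0$ and matching the $1/x$ term gives the two formal solutions
\[
v_\pm(x)\sim e^{\pm ikx}\,x^{\pm\alpha}\bigl(1+O(1/x)\bigr),\qquad \alpha=\frac{ic}{2k},
\]
so the power-law exponent is nonzero and, crucially, depends on $k$. Hence any algebraic $\omega$ must at infinity be asymptotic to one of these branches: in Case~1 ($d=1$, $\omega$ rational) the $1/x$-coefficient of the Laurent expansion of $\omega$ at infinity is exactly $\pm\alpha$, while for $d\in\{2,4,6,12\}$ the ramified branches contribute $1/x$-coefficients that are fixed rational multiples of $\pm\alpha$.

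The second step is the local analysis at the finite poles of $u$ together with a residue count. At a pole $x_0$ of $u$ of order $m$, the constant $k^2$ enters \eqref{eqn:ZS0} only at order $(x-x_0)^0$, hence does not influence the $(x-x_0)^{-1}$-coefficient of $\omega$; therefore the residue of $\omega$ at $x_0$ (equivalently the local exponent carried by $v$) is independent of $k$ and ranges over a finite, $k$-independent set, for every pole order $m\ge1$. Applying the residue theorem on the Riemann sphere to the rational $\omega$ of Case~1 gives
\[
\sum_{x_0}\operatorname*{Res}_{x=x_0}\omega+n=\pm\alpha=\pm\frac{ic}{2k},
\]
the sum being over the finite poles of $u$ and $n\in\Zset_{\ge0}$ accounting for the zeros of $v$. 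Thus solvability in Case~1 forces $ic/(2k)\in\Sigma+\Zset$ for a finite, $k$-independent set $\Sigma\subset\Cset$; since the left-hand side is purely imaginary and the map $k\mapsto ic/(2k)$ is injective on each of $(0,\infty)$ and $(-\infty,0)$, this holds for only finitely many $k\in\Rset^\ast$. The same count applied to the symmetric powers governing the algebraic cases yields conditions $ic/(2k)\in\Sigma_d+\tfrac1d\Zset$ with $\Sigma_d$ finite and $k$-independent, each satisfied by only countably many $k$.

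Taking the union over the finitely many cases, the set of $k\in\Rset^\ast$ for which \eqref{eqn:ZS0} is solvable by quadrature is at most countable, so for every other $k$ the Galois group is all of $\SL(2,\Cset)$ and \eqref{eqn:ZS0} is not solvable by quadrature, proving the theorem. I expect the main obstacle to be the bookkeeping in the algebraic cases $d\in\{2,4,6,12\}$ and at the high-order (irregular) finite poles: one must confirm that the $k$-independence of the finite local data persists for poles of order $\ge3$ (so that $k$ enters solely through the exponent $\alpha$ at infinity), and pin down the exact rational relation between the ramified exponents at infinity and the finite exponents imposed by $\operatorname{Sym}^d$ of the solution space. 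Computing $\alpha$ and checking $c\neq0$ are routine, and the mechanism can be cross-checked against the Whittaker (confluent hypergeometric) equation, whose non-integrability for generic parameters reflects exactly the $k$-dependent irregular exponent isolated here.
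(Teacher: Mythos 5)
Your analysis of Kovacic's case $d=1$ is correct and is in substance the same as the paper's treatment of case (a) of Proposition~\ref{prop:K1}: your residue identity $\sum_{x_0}\mathrm{Res}_{x_0}\omega+n=\pm\alpha$ is exactly the algorithm's requirement $d_c=\kappa_\infty^{c(\infty)}-\sum_{s}\kappa_s^{c(s)}\in\Nset_0$, and the mechanism (the exponent $\kappa_\infty^\pm=\pm i\bar{u}_1/(2k)$ at the irregular point at infinity moves injectively with $k$ while the finite local data is $k$-independent) is the one the paper uses. Also note that the cases $d\in\{4,6,12\}$ require no symmetric-power bookkeeping at all: they constitute case (c) of Proposition~\ref{prop:K1}, and the necessary condition in Proposition~\ref{prop:K2}(c) demands that the order of $r(x)$ at $\infty$ exceed one, which fails here since $r(x)\to -k^2\neq 0$; so that case never occurs.

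The genuine gap is the dihedral case $d=2$ (case (b) of Proposition~\ref{prop:K1}), and it is not a repairable technicality. Your claimed necessary condition $ic/(2k)\in\Sigma_2+\tfrac{1}{2}\Zset$ is false, because the $k$-dependent exponents at infinity cancel out of every rational invariant of the degree-two algebraic function $\omega$: its two branches follow the two formal solutions, $\omega_{1,2}\sim\pm ik\pm\alpha/x$, so the trace $\omega_1+\omega_2$ (the rational function to which your residue count would apply) is $O(x^{-2})$ at infinity, and the $x^{-1}$-coefficient of the product $\omega_1\omega_2$ is likewise $k$-independent. Consequently the residue theorem yields only the $k$-independent condition $d_e=-\tfrac{1}{2}\sum_{s}e_s\in\Nset_0$ — observe that \eqref{eqn:de} contains no contribution from infinity — hence no restriction on $k$ of your form. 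Concretely, Example~\ref{ex:2b} of the paper exhibits a potential with $m_2-m_1=1$ and real, $k$-independent finite data $E_s=\{-1,2,5\}$ for which case (b) occurs and \eqref{eqn:ZS0} \emph{is} solvable by quadrature at $k=\pm\tfrac{2}{3}\sqrt{3}$, even though $ic/(2k)$ is purely imaginary and can never lie in the real set $\Sigma_2+\tfrac{1}{2}\Zset$; your argument would wrongly exclude this. This is precisely where the paper has to do real work: it assumes case (b) holds on an open interval of $k$, exploits the fact that Kovacic's identity \eqref{eqn:P} with $r=-k^2-u$ is affine in $k^2$ (so the coefficient functions of the $k^2$-part must themselves be linearly dependent), forces $\theta=-q'/q$ and $P=q$, and deduces $u=Cq(x)^{-4}$, contradicting $m_2-m_1=1$. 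Without an argument of this kind — showing case (b) cannot persist on a set of $k$ with nonempty interior, rather than trying to exclude it pointwise by exponent counting — your proof does not go through.
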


Obviously, if $m_2-m_1>1$, then the rational function $u(x)$ satisfies \eqref{eqn:A1}
 and condition~{(A${}_1$).

The outline of this paper is as follows.
In Section~2 we collect preliminary results
 for proving Theorems~\ref{thm:main1} and \ref{thm:main2}.
We give proofs of Theorems~\ref{thm:main1}, \ref{thm:main2} and \ref{thm:main3}
 in Sections~3, 4 and 5, respectively.
Fundamental information on local differential Galois groups near irregular singularities,
 which is needed in Section~4, is provided in Appendix~A,
 and the result of Kovacic \cite{K86}, which plays a crucial role in Section~5,
 is briefly reviewed in Appendix~B.
 

\section{Preliminary Results}
In this section we give preliminary results
 required to prove Theorems~\ref{thm:main1} and \ref{thm:main2}.
Henceforth we assume that condition~(A${}_1$) holds.

Let $v_l(x)$, $l=1,2$, be scalar functions on $\Rset$ or $\Cset$,
 and let $W(v_1(x),v_2(x))$ denote the Wronskian of $v_1(x)$ and $v_2(x)$, i.e.,
\begin{equation}
W(v_1(x),v_2(x))=\begin{vmatrix}
v_1(x) & v_2(x)\\
v_{1x}(x) & v_{2x}(x)
\end{vmatrix}
=v_1(x)v_{2x}(x)-v_2(x)v_{1x}(x).
\label{eqn:2a}
\end{equation}
Since the first-order derivative $v_x$ does not appear in \eqref{eqn:ZS0},
  the Wronskians of $\phi(x;-k),\phi(x;k)$ and of $\psi(x;k),\psi(x;-k)$ are independent of $x$,
  so that by \eqref{eqn:bc}
\begin{equation}
W(\phi(x;k),\phi(x;-k))=W(\psi(x;-k),\psi(x;k))=2ik.
\label{eqn:W}
\end{equation}
Replacing $k$ with $-k$ in \eqref{eqn:ab+}, we have
\begin{equation}
\phi(x;-k)=a(-k)\psi(x;k)+b(-k)\psi(x;-k),
\label{eqn:ab-}
\end{equation}
and consequently
\begin{equation}
a(k)a(-k)-b(k)b(-k)=1
\label{eqn:2b}
\end{equation}
by \eqref{eqn:W}.
We have the following properties of the scattering coefficients. 

\begin{prop}\
\label{prop:2a}
\begin{itemize}
\setlength{\leftskip}{-1.6em}
\item[(i)]
$a(k),b(k)$ are analytic in $\Rset^\ast;$
\item[(ii)]
$a(k)$ can be analytically continued in $\Cset_+;$
\item[(iii)]
$a(k)$ only has discrete zeros in $\Cset_+\cup\Rset;$
\item[(iv)]
If $b(k)=0$ for any $k\in\Rset^\ast$,
 then $a(k)$ only has finitely many zeros in $\Cset_+$.
\end{itemize}
\end{prop}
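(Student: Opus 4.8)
The plan is to reduce all four statements to the analytic behaviour in $k$ of the Jost solutions, via Wronskian representations of the scattering coefficients. Pairing the defining identity \eqref{eqn:ab+} with $\psi(x;k)$ and with $\psi(x;-k)$ and invoking \eqref{eqn:W} yields
\[
a(k)=\frac{1}{2ik}\,W(\phi(x;k),\psi(x;k)),\qquad
b(k)=-\frac{1}{2ik}\,W(\phi(x;k),\psi(x;-k)),
\]
both of which are independent of $x$ by the observation preceding \eqref{eqn:W}. Thus it suffices to track how $\phi(x;k)$, $\psi(x;k)$ and their $x$-derivatives depend on $k$.

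For part~(i), I would represent $\phi$ and $\psi$ as solutions of the Volterra integral equations associated with \eqref{eqn:ZS0}, namely $\phi(x;k)=e^{-ikx}+\int_{-\infty}^{x}k^{-1}\sin\!\big(k(x-y)\big)u(y)\phi(y;k)\,\d y$ and its mirror for $\psi$ (as in Theorem~8.1 of \cite{CL55} and Section~2 of \cite{Y23}). For fixed $k\in\Rset^\ast$ the kernel is bounded by $1/|k|$, so condition~(A${}_1$) makes the Neumann series converge locally uniformly and produces solutions analytic in $k$ on $\Rset^\ast$; since a Wronskian of $k$-analytic solutions is $k$-analytic and $1/(2ik)$ is analytic there, $a$ and $b$ are analytic on $\Rset^\ast$. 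For part~(ii), I would pass to the normalized functions $m_-=e^{ikx}\phi$ and $m_+=e^{-ikx}\psi$, whose integral equations carry the kernel $(2ik)^{-1}(e^{2ik(x-y)}-1)=\int_0^{x-y}e^{2iks}\,\d s$; for $\Im k>0$ this stays bounded on the relevant half-line, so the series converges, $m_\pm(x;k)$ and hence $\phi,\psi$ extend analytically to $\Cset_+$, and $m_\pm\to1$ uniformly as $|k|\to\infty$. Consequently $2ik\,a(k)=W(\phi(x;k),\psi(x;k))$ extends analytically to $\Cset_+$ and $a(k)\to1$ as $k\to\infty$ in $\overline{\Cset_+}$.

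For part~(iii), the function $a$ is analytic on $\Cset_+$ by (ii) and on $\Rset^\ast$ by (i), and it is not identically zero because $a(k)\to1$; working with the single analytic function $2ik\,a(k)=W(\phi,\psi)$, which is regular up to and including $k=0$, the identity theorem forces the zeros of $a$ to be isolated, i.e.\ discrete, throughout $\Cset_+\cup\Rset$. For part~(iv), substituting $b\equiv0$ into \eqref{eqn:2b} gives $a(k)a(-k)=1$ for $k\in\Rset^\ast$, so $a$ has no zeros on $\Rset^\ast$; letting $k\to0$ gives $a(0)^2=1$, whence $a$ is also nonzero in a neighbourhood of the origin. Combined with $a(k)\to1$ at infinity, the zeros of $a$ are then confined to a compact subset of $\Cset_+$, and since they are discrete by (iii) there can be only finitely many.

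The main obstacle is the behaviour at $k=0$ under the weak hypothesis~(A${}_1$): in contrast to the classical first-moment condition $\int(1+|x|)|u|\,\d x<\infty$, condition~(A${}_1$) controls only the $L^1$-tails of $u$, and the kernel bound used above degenerates as $k\to0$, so establishing that $2ik\,a(k)$ extends regularly through the origin and that the relation $a(k)a(-k)=1$ survives the limit $k\to0$ (both needed to exclude accumulation of zeros there) is the delicate step. A secondary point to dispatch is that $u$ is only meromorphic and may have poles on $(-R_0,R_0)$, so $\phi$ and $\psi$ must be read as meromorphic continuations in $x$; since the Wronskian representation is evaluated in the asymptotic regime of large $|x|$, where \eqref{eqn:A1} guarantees integrability, this does not disturb the $k$-analyticity but must be acknowledged when asserting that $a$ and $b$ are well defined.
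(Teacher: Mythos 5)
Your Wronskian representations and your endgame for (iii)--(iv) (identity theorem plus $a(k)\to1$, and relation \eqref{eqn:2b}) do match the paper's skeleton, but the core of your argument --- Volterra integral equations for $\phi,\psi$ posed on the real line --- fails for exactly the class of potentials this proposition is designed to cover, and the point you dismiss as ``secondary'' is in fact the main content of the paper's proof. Under condition (A${}_1$) the potential is merely meromorphic near $\Rset$ and may have poles \emph{on} $\Rset$ inside $(-R_0,R_0)$ (e.g.\ the Adler--Moser potential $u(x)=2/x^2$, or the negaton potential \eqref{eqn:ex3au} with its real double pole). Then $u\notin L^1_{\mathrm{loc}}$ near such a pole, your integral equation $\phi=e^{-ikx}+\int_{-\infty}^{x}k^{-1}\sin\bigl(k(x-y)\bigr)u(y)\phi\,\d y$ and its mirror for $\psi$ do not converge across the pole, and --- more fundamentally --- $\phi$ (built on the left tail) and $\psi(\cdot;\pm k)$ (built on the right tail) do not live on a common real domain. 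Evaluating the Wronskian ``in the asymptotic regime of large $|x|$'' does not repair this: $W(\phi,\psi)$ requires both solutions at the \emph{same} point, so $\phi$ must be continued past the real poles, which can only be done along a path in $\Cset$ avoiding them, and the continuation (hence the very definition of $a$ and $b$) depends on the homotopy class of that path. The paper supplies precisely this missing device: it replaces $\Rset$ by a curve $\Gamma=\{\xi+ic\sech\xi\}\subset\Cset_+$, chosen to miss the (discrete) pole set and asymptotic to $\Rset$ so that the boundary conditions \eqref{eqn:bc} still make sense; the Jost solutions, their $k$-analyticity, and the limits $a(k)\to1$, $b(k)\to0$ (obtained there by the rescaling $\xi\mapsto k\xi$ of the equation on $\Gamma$, not by kernel estimates) are all established on $\Gamma$ and on a neighborhood $U$ of it.

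A second, admitted-but-unresolved gap: your parts (iii)--(iv) lean on the claims that $2ik\,a(k)=W(\phi,\psi)$ is ``regular up to and including $k=0$'' and that $a(0)^2=1$, yet your closing paragraph concedes that exactly this behaviour at $k=0$ is not established under (A${}_1$), where only $L^1$ tails and no first-moment condition are available. As written, the exclusion of zeros accumulating at the origin --- which is what the compactness step in (iv) needs --- is unsupported. The paper does not route the argument through a value $a(0)$: it deduces from $b\equiv0$ on $\Rset^\ast$, relation \eqref{eqn:2b} and analyticity near $\Rset^\ast$ that $a$ has no zero near $k=0$ in $\overline{\Cset}_+$, and then combines this with the discreteness of zeros from (iii) and with $a(k)\to1$ as $|k|\to\infty$ to conclude finiteness. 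You should restructure (iv) along those lines rather than through a limit value of $a$ at the origin that you cannot define.
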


Similar results are obtained in Section~4.1
 (especially Proposition~4.1) of \cite{Y23}
 when $u(x)$ is analytic on $\Rset$.

\begin{proof}
Let $S$ denote the set of poles of $u(x)$.
Then $S$ is discrete and contains no accumulation point.
Choose a curve $\Gamma=\{\gamma(\xi)\in\Cset_+\mid \xi\in\Rset\}$ with
\[
\gamma(\xi)=\xi+ic\sech\xi
\]
does not intersect $S$.
Hence, $u(x)$ is analytic on $\Gamma$.

Regard \eqref{eqn:ZS0} as a differential equation on $\Gamma$
 and let $U$ be a neighborhood of $\Gamma$ in $\Cset_+$ such that $S\cap U=\emptyset$.
Then its solutions $v=\phi(x;k)$ and $\psi(x;k)$ are bounded and analytic
 in $k\in\Rset^\ast$ as well as in $x\in U$.
By \eqref{eqn:ab+} and \eqref{eqn:2a} we have
\begin{equation}
a(k)=\frac{W(\phi(x,k),\psi(x;k))}{2ik},\quad
b(k)=-\frac{W(\phi(x,k),\psi(x;-k))}{2ik}.
\label{eqn:prop2a1}
\end{equation}
Hence, we obtain part~(i).
Similarly we see that $v=\phi(x;k)$ and $\psi(x;k)$ are bounded and analytic
 in $\Cset_+$ for $x\in U$ satisfying $|x|<R$ with some $R>0$.
This yields part~(ii) along with \eqref{eqn:prop2a1}.

Equation~\eqref{eqn:ZS0} is written as
\begin{equation}
v_{\xi\xi}-\frac{\gamma_{\xi\xi}(\xi)}{\gamma_\xi(\xi)}v_\xi
 +\gamma_\xi(\xi)(k^2+u(\gamma(\xi)))v=0
\label{eqn:prop2a2}
\end{equation}
on $\Gamma$.
Changing the independent variable as $\xi\mapsto k\xi$ in \eqref{eqn:prop2a2}, we have
\[
v_{\xi\xi}-\frac{\gamma_{\xi\xi}(\xi/k)}{k\gamma_\xi(\xi/k)}v_\xi
 +\gamma_\xi(\xi/k)\left(1+\frac{1}{k^2}u(\gamma(\xi/k))\right)v=0,
\]
which reduces to
\[
v_{\xi\xi}+v=0
\]
as $k\to\infty$.
This implies that
\[
a(k)\to 1,\quad b(k)\to 0\quad\mbox{as $|k|\to\infty$}.
\]
We apply the identity theorem (e.g., Theorem~3.2.6 of \cite{AF03})
 to obtain  part~(iii), since $a(k)$ is analytic in $\Cset_+\cup\Rset$.
If $b(k)=0$ for any $k\in\Rset^\ast$,
 then by its analyticity $a(k)$ has no zero near $k=0$ in $\overline{\Cset}_+$.
Hence, part~(iv) follows from part~(iii).
\end{proof}

\begin{rmk}
The conclusion in Proposition~{\rm\ref{prop:2a}(iv)} holds
 under a weaker condition.
For example, if $\lim_{k\to 0} b(k)b(-k)=b_0$ $(\neq -1)$, then it holds.
\end{rmk}

Let $G$ be the differential Galois group of \eqref{eqn:ZS0}.
Then $G$ is an algebraic group such that $G\subset\SL(2,\Cset)$
 since Eq.~\eqref{eqn:ZS0} has no term depending on $v_x$
 (see, e.g., Chapter~6 of \cite{K76} or Section 2.2 of \cite{M99}).
We have the following classification
 for such algebraic groups (see Section~2.1 of \cite{M99} for a proof).
Recall that an algebraic group $\G$ contains in general a unique maximal algebraic subgroup $\G^0$,
 which is called the \emph{connected component of the identity}
 or \emph{connected identity component}.

\begin{prop}
\label{prop:2b}
Any algebraic group $\G\subset\SL(2,\Cset)$ is similar to one of the following types:
\begin{enumerate}
\setlength{\leftskip}{-1.2em}
\item[(i)] $\G$ is finite and $\G^0= \{\id\}$,
 where $\id$ is the $2\times 2$  identity matrix$;$
\item[(ii)] $\G = \left\{
\begin{pmatrix}
\lambda & 0\\
\mu & \lambda^{-1} 
\end{pmatrix}
\middle|\,\lambda\text{ is a root of $1$, $\mu\in\Cset$}
\right\}$
and $\G^0 = \left\{\begin{pmatrix}
1&0\\
\mu & 1 
\end{pmatrix}
\middle|\,\mu \in \Cset\right\}$;
\item[(iii)] 
$\G = \G^0 = 
\left\{
\begin{pmatrix}
\lambda &0 \\
0 & \lambda ^{-1}
\end{pmatrix}
\middle|\,\lambda \in \Cset^{*}
\right\}$;
\item[(iv)]
$\G = \left\{
\begin{pmatrix}
\lambda & 0\\
0 & \lambda^{-1} 
\end{pmatrix},
\begin{pmatrix}
0 & -\beta^{-1}\\
\beta & 0 
\end{pmatrix}
\middle|\,\lambda, \beta \in \Cset^{*}
\right\}$
and $\G^0 = \left\{\begin{pmatrix}
\lambda &0\\
0 & \lambda^{-1}
\end{pmatrix}
\middle|\, \lambda \in \Cset^*\right\}$;
\item[(v)] $\G = \G^0 = \left\{
\begin{pmatrix}
\lambda & 0\\
\mu & \lambda^{-1}
\end{pmatrix}
\middle|\,\lambda \in \Cset^{*},\, \mu \in \Cset
\right\}$;
\item[(vi)] $\G = \G^0 = \SL (2,\, \Cset)$.
\end{enumerate}
\end{prop}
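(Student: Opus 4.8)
The plan is to determine first all possibilities for the connected identity component $\G^0$, and then to recover $\G$ from $\G^0$ by using that the quotient $\G/\G^0$ is finite and that $\G$ must normalize its own identity component. Since $\G$ is algebraic, $\G^0$ is a connected algebraic subgroup of $\SL(2,\Cset)$, and I would pass to the Lie algebra: $\G^0$ corresponds to a Lie subalgebra $\mathfrak{g}\subset\mathfrak{sl}(2,\Cset)$, and conversely $\G^0$ is recovered as the connected group generated by the exponential image of $\mathfrak{g}$. Because $\dim\mathfrak{sl}(2,\Cset)=3$, the whole analysis of the connected case reduces to running through the four cases $\dim\mathfrak{g}=0,1,2,3$.

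First I would handle the connected case. If $\dim\mathfrak{g}=0$ then $\G^0=\{\id\}$, and if $\dim\mathfrak{g}=3$ then $\mathfrak{g}=\mathfrak{sl}(2,\Cset)$ and hence $\G^0=\SL(2,\Cset)$, giving type~(vi). For $\dim\mathfrak{g}=1$ the subalgebra is spanned by a single nonzero $X$, and up to conjugacy $X$ is either semisimple or nilpotent by the Jordan decomposition; the semisimple case yields the diagonal torus and the nilpotent case the one-parameter unipotent group. For $\dim\mathfrak{g}=2$, a two-dimensional Lie algebra is solvable, so by Lie's theorem $\mathfrak{g}$ is conjugate to the algebra of trace-zero upper-triangular matrices, whose connected group is the full triangular (Borel) subgroup. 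Thus, up to conjugacy, the connected possibilities for $\G^0$ are exactly $\{\id\}$, the unipotent subgroup, the diagonal torus, the Borel subgroup, and $\SL(2,\Cset)$.

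Next I would extend each $\G^0$ to $\G$. Since $\G^0$ is normal of finite index in $\G$, one has $\G\subset N_{\SL(2,\Cset)}(\G^0)$, and $\G$ is a finite union of cosets of $\G^0$. When $\G^0=\{\id\}$ this merely says $\G$ is finite, which is type~(i). When $\G^0$ is unipotent, its normalizer is the Borel subgroup, and the image of $\G$ under the projection of the Borel onto its diagonal factor $\Cset^\ast$ is finite, hence a group of roots of unity; this gives type~(ii). When $\G^0$ is the torus, its normalizer is the torus together with the single Weyl coset represented by the antidiagonal involution, so $\G$ is either the torus (type~(iii)) or the full normalizer (type~(iv)). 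When $\G^0$ is the Borel subgroup it is self-normalizing, forcing $\G=\G^0$ (type~(v)); and when $\G^0=\SL(2,\Cset)$ necessarily $\G=\SL(2,\Cset)$. Collecting the cases yields the six listed types.

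The step I expect to demand the most care is the connected classification, specifically the claim that no connected subgroups exist beyond the five listed: that every two-dimensional subalgebra of $\mathfrak{sl}(2,\Cset)$ is conjugate to the Borel, and that the one-dimensional case splits cleanly into the semisimple and nilpotent orbits. The conceptual engine here is the Lie--Kolchin theorem, which guarantees that any proper connected $\G^0$ is solvable and hence fixes a line in $\Cset^2$ and is triangularizable; once that reduction is in place the remaining identification is routine. By contrast, the normalizer computations of the final step are standard structural facts about $\SL(2,\Cset)$ and present no genuine difficulty.
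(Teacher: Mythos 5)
The paper itself contains no proof of Proposition~\ref{prop:2b}: it quotes the classification and refers to Section~2.1 of \cite{M99} (which in turn rests on Kaplansky \cite{K76} and Kovacic \cite{K86}). So your attempt is to be measured against the standard literature argument, and in fact it \emph{is} that argument, correctly organized: classify the connected component $\G^0$ through its Lie algebra $\mathfrak{g}\subset\mathfrak{sl}(2,\Cset)$ by dimension ($0,1,2,3$), then recover $\G$ from the finiteness of $\G/\G^0$ together with $\G\subset N_{\SL(2,\Cset)}(\G^0)$, using that the normalizer of the unipotent group is the Borel subgroup, the normalizer of the torus is the torus together with the antidiagonal coset, and the Borel subgroup is self-normalizing. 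The resulting list of connected groups ($\{\id\}$, unipotent, torus, Borel, $\SL(2,\Cset)$) and the passage to the six types are both complete. What your route buys over the paper's citation is a self-contained proof; what the citation buys is brevity and a statement already phrased in the form used later in Section~4.

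Three small repairs would make your write-up airtight. First, in type~(ii) the phrase ``$\lambda$ is a root of $1$'' must be read as ``$\lambda^n=1$ for some fixed $n$'': the set of all roots of unity is not Zariski closed (its closure is $\Cset^\ast$), so the group as literally written is not algebraic. Your own argument produces exactly the corrected statement, since the image of $\G$ in the quotient of the Borel by its unipotent radical, isomorphic to $\Cset^\ast$, is a finite subgroup, hence the group of $n$-th roots of unity for one fixed $n$, and $\G$ is the full preimage because it contains $\G^0$. Second, your appeal to Lie--Kolchin is stated backwards: that theorem \emph{assumes} solvability and \emph{concludes} triangularizability; the solvability of a proper connected subgroup comes from your dimension count (any Lie algebra of dimension at most $2$ is solvable), which is already in your proof, so nothing is lost. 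Third, the dichotomy ``semisimple or nilpotent'' for a nonzero $X\in\mathfrak{sl}(2,\Cset)$ is not a general consequence of the Jordan decomposition (generically both parts are nonzero); it is special to trace-zero $2\times 2$ matrices, whose eigenvalues are $\pm\lambda$ with $\lambda^2=-\det X$, so $\lambda\neq 0$ forces diagonalizability and $\lambda=0$ forces nilpotency. With these points made explicit, your proof is correct and complete.
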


The system~\eqref{eqn:ZS0} is solvable by quadrature unless $G$ is of type (vi).
Proposition~\ref{prop:2b} plays a key role
 in the proof of Theorem~\ref{thm:main2} in Section~4.


\section{Proof of Theorem~\ref{thm:main1}}
In this section we give a proof of Theorem~\ref{thm:main1}.
Henceforth we assume that the hypotheses of Theorem~\ref{thm:main1} hold
 and $b(k)=0$ for $k\in\Rset^\ast$.

\begin{proof}[Proof of Theorem~$\ref{thm:main1}$]
We see by Proposition~\ref{prop:2a}(iii) and \eqref{eqn:2b}
 that $a(k)\neq 0$ for $k\in\Rset^\ast$
 and $a(k)$ has finitely many zeros in $\Cset_+$.
Assume that $a(k)$ has $n$ zeros and
 let $k_j$ be its zero of multiplicity $\nu_j$ in $\Cset_+$ for $j=1,\ldots,n$.
Let the curve $\Gamma=\{\gamma(\xi)\in\Cset_+\mid \xi\in\Rset\}$ contain no pole of $u(x)$,
 and let $U$ be a neighborhood of $\Gamma$ in $\Cset_+$ containing no pole of $u(x)$,
 as in the proof of Proposition~\ref{prop:2a}.

Let
\begin{equation}
M(x;k)=\phi(x;k)e^{ikx},\quad
N(x;k)=\psi(x;k)e^{ikx}.
\label{eqn:MN1}
\end{equation}
By \eqref{eqn:bc} we have
\begin{equation}
\mbox{$M(x;k)\sim 1$ as $x\to-\infty$},\quad
\mbox{
$N(x;-k)\sim 1$ as $x\to+\infty$.}
\label{eqn:MN1a}
\end{equation}
Since $\phi(x;k)$ and $\psi(x;-k)$ are analytic in $x\in U$ and $k\in\Cset_+$,
 so are $M(x;k)$ and $N(x;-k)$.
Using \eqref{eqn:ab+}, we have
\begin{equation}
M(x;k)=a(k)N(x;-k)e^{2ikx}+b(k)N(x;k).
\label{eqn:MN1b}
\end{equation}
Let
\[
N_j^r(x):=\frac{\partial^rN}{\partial k^r}(x;k_j),\quad
r=0,\ldots,\nu_j-1,\quad
j=1,\ldots,n.
\]
Differentiating \eqref{eqn:MN1b} with respect $k$ and substituting $k=k_j$, we obtain
\begin{equation}
\frac{\partial^rM}{\partial k^r}(x;k_j)
=\frac{\partial^r}{\partial k^r}\bigl(b(k)N(x;k)\bigr)\Big|_{k=k_j},\quad
r=0,\ldots,\nu_j-1,
\label{eqn:MN1c}
\end{equation}
for $j=1,\ldots,n$ since $a(k)$ has a zero of multiplicity $\nu_j$ at $k=k_j$.
In particular, the right hand sides of \eqref{eqn:MN1c}
 can be represented by linear combinations of $N_j^r(x)$, $r=0,\ldots,\nu_j-1$.
Actually, when $\nu_j>1$, we have
\begin{align*}
M(x;k_j)
=b(k_j)N_j^0(x),\quad
\frac{\partial M}{\partial k}(x;k_j)
=b(k_j)N_j^1(x)+b_k(k_j)N_j^0(x),\quad
\ldots.
\end{align*}

Define the projection operators
\[
\P^\pm(f)=\frac{1}{2\pi i}\int_{-\infty}^\infty\frac{f(\kappa)}{\kappa-(k\pm i 0)}\d\kappa
\]
for $f\in L^1(\Rset)$, where $k\in\Cset_+$ or $\Cset_-$
 depending on whether the upper or lower signs are taken simultaneously.
If $f_+$ (resp. $f_-$) is analytic in $\Cset_+$ (resp. in $\Cset_-$)
 and $f_\pm(k)\to 0$ as $|k|\to\infty$, then
\[
\P^\pm(f_\pm)=\pm f_\pm,\quad
\P^\pm(f_\mp)=0
\]
(see, e.g., Chapter~7 of \cite{AF03}).
Dividing both sides of \eqref{eqn:MN1b} by $a(k)$ and noting \eqref{eqn:MN1a},
 we apply $\P^-$ to the resulting equation to obtain
\begin{equation}
N(x;k)e^{-2ikx}=1-\frac{1}{2\pi i}\sum_{j=1}^n\int_{|\kappa-k_j|=\delta}
 \frac{M(x;\kappa)}{(k+\kappa)a(\kappa)}\d\kappa
\label{eqn:MN1d}
\end{equation}
since $M(x;k)$ and $N(x;-k)$ are analytic for $k\in\Cset_+$ and $\Cset_-$, respectively,
 and $b(k)=0$ for $k\in\Rset^\ast$.
Using the method of residues, we see that the above integrals
 are represented by linear combinations of $N_j^r(x)$, $r=0,\ldots,\nu_j-1$.
For example, when $\nu_j=1$,
\[
\frac{1}{2\pi i}\int_{|\kappa-k_j|
 =\delta}\frac{M(x;\kappa)}{(k+\kappa)a(\kappa)}\d\kappa
 =\frac{b(k_j)}{(k+k_j)a_k(k_j)}N_j^0(x),
\]
and when $\nu_j=2$,
\begin{align*}
&
\frac{1}{2\pi i}\int_{|\kappa-k_j|=\delta}\frac{M(x;\kappa)}{(k+\kappa)a(\kappa)}\d\kappa\\
&
=\frac{2}{(k+k_j)a_{kk}(k_j)}
 \biggl(b(k_j)N_j^1(x)
 +\biggl(b_k(k_j)-\frac{b(k_j)}{k+k_j}
 -\frac{a_{kkk}(k_j)b(k_j)}{3a_{kk}(k_j)}\biggr)N_j^0(x)\biggr).
 \end{align*}

Differentiating \eqref{eqn:MN1d} with respect to $k$ up to $\nu_j-1$ times
 and setting $k=k_j$,
 we obtain a system of linear equations
 about $N_j^r(x)$, $r=0,\ldots,\nu_j-1$, $j=1,\ldots,n$,
 and solve it to obtain them as rational functions
 of $x$ and $\{e^{2ik_jx}\}_{j=1}^n$, using basic arithmetic operations.
Note that
\[
\frac{\partial}{\partial k}(N(x;k)e^{-2ik x})
=\left(\frac{\partial N}{\partial k}(x;k)-2ixN(x;k)\right)e^{-2ik x}
\]
and so on.
Hence, we see by \eqref{eqn:MN1} and \eqref{eqn:MN1d}
 that the Jost solution $\psi(x;k)$
 is also represented by a rational function of $x$ and $\{e^{ik_jx}\}_{j=1}^n$.
This means the desired result.
 \end{proof}
 
\begin{rmk}
We show that
\begin{equation}
u(x)=\frac{1}{\pi}\frac{\partial}{\partial x}\biggl(\sum_{j=1}^n\int_{|\kappa-k_j|=\delta}
 \frac{M(x;\kappa)}{a(\kappa)}\d\kappa\biggr)
\label{eqn:u1}
\end{equation}
$($see, e.g., Section~$9.3$ of {\rm\cite{A11})},
 so that $u(x)$ is also represented by rational functions of $x$ and $\{e^{ik_jx}\}_{j=1}^n$.
The sign of the corresponding formula {\rm(4.12)} in {\rm\cite{Y23}} contains an error.
Note that for example, when $\nu_j=1$,
\[
\frac{1}{\pi}\int_{|\kappa-k_j|}\frac{M(x;\kappa)}{a(\kappa)}\d\kappa
 =\frac{2ib(k_j)}{a_k(k_j)}N_j^0(x),
\]
and when $\nu_j=2$,
\begin{align*}
&
\frac{1}{\pi}\int_{|\kappa-k_j|}\frac{M(x;\kappa)}{a(\kappa)}\d\kappa\\
&
=\frac{4i}{a_{kk}(k_j)}\left(
 b(k_j)N_j^1(x)+\left(b_k(k_j)-\frac{a_{kkk}(k_j)b(k_j)}{3a_{kk}(k_j)}\right)N_j^0(x)\right).
\end{align*}
Since $\Im k_j>0$, $j=1,\ldots,n$,
 it follows from \eqref{eqn:MN1a} and \eqref{eqn:u1} that
\[
\lim_{x\to\pm\infty}u(x)=0.
\]
\end{rmk}

\begin{figure}
\includegraphics[scale=0.6]{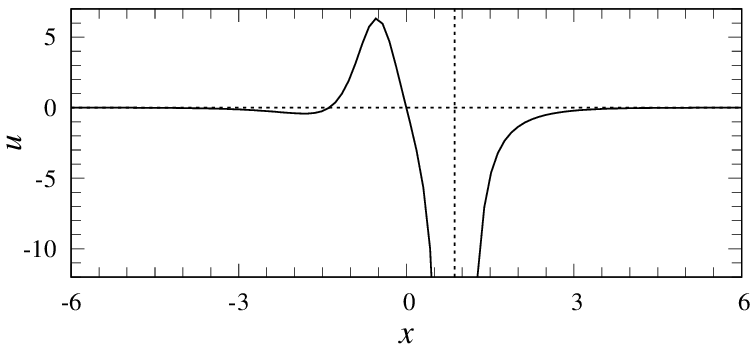}
\caption{Potential $u(x)$ in Example~\ref{ex:3a}.
\label{fig:3a}}
\end{figure}

\begin{ex}
\label{ex:3a}
Let $n=1$ and $\nu_1=2$ in the proof of Theorem~$\ref{thm:main1}$.
For simplicity we assume that $k_1=i$, $a_{kkk}(k_1),b_k(k_1)=0$ and
\begin{equation}
a_{kk}(k_1)=-\tfrac{1}{2},\quad
b(k_1)=1.
\label{eqn:ex3a}
\end{equation}
The system of linear equations about $N_1^0(x),N_1^1(x)$ becomes
\begin{align*}
&
N_1^0(x)e^{2x}
 =1-2iN_1^1(x)+N_1^0(x),\\
&
(N_1^1(x)-2ixN_1^0(x))e^{2x}
=N_1^1(x)+iN_1^0(x).
\end{align*}
Solving the above equation, we obtain
\begin{align*}
N_1^0(x)=\frac{e^{2x}-1}{e^{4x}-2(2x+1)e^{2x}-1},\quad
N_1^1(x)=\frac{i(2x e^{2x}+1)}{e^{4x}-2(2x+1)e^{2x}-1}.
\end{align*}
From \eqref{eqn:MN1} and \eqref{eqn:MN1d} we have
\begin{equation}
\psi(x;k) =N(x;k)e^{-ikx}
 =\biggl(1+\frac{4i[(2(k+i)x+i)e^{2x}+k]}{(k+i)^2(e^{4x}-2(2x+1)e^{2x}-1)}
\biggr)e^{ikx}.
\label{eqn:ex3apsi}
\end{equation}
Moreover, it follows from \eqref{eqn:u1} that
\begin{equation}
u(x)=-\frac{16e^{2x}(e^{2x}-1)((2x-1)e^{2x}+2x+3)}{(e^{4x}-2(2x+1)e^{2x}-1)^2}.
\label{eqn:ex3au}
\end{equation}
See Fig.~$\ref{fig:3a}$ for the shape of $u(x)$.
It has a pole of order two at $x=0.864558\ldots$.
The potential \eqref{eqn:ex3au} is of a negaton-type
 {\rm\cite{M02,MY04,Mat02,RSK96}}.
 
Let $k\in\Rset^\ast$.
We see that
\[
\psi(x;k)\sim\left(\frac{k-i}{k+i}\right)^2e^{ikx}
\]
as $x\to-\infty$.
Hence, it follows from \eqref{eqn:bc} that
\[
\phi(x;k)=\left(\frac{k-i}{k+i}\right)^2\psi(x;-k),
\]
which yields
\[
a(k)=\left(\frac{k-i}{k+i}\right)^2
\]
by \eqref{eqn:ab+}.
Thus, $a_k(i)=0$ and $a_{kk}(i)=-\tfrac{1}{2}$.
On the other hand, using \eqref{eqn:ex3apsi}, we see that $\psi(x;i)\sim e^x$
as $x\to-\infty$, and consequently $b(i)=1$.
These agree with our assumption \eqref{eqn:ex3a}.
\end{ex}


\section{Proof of Theorem~\ref{thm:main2}}
In this section we give a proof of Theorem~\ref{thm:main2}.
Henceforth we assume that the hypotheses of Theorem~\ref{thm:main2} hold
 as well as $k\in\Rset^\ast$.

Letting $y=1/x$, we rewrite \eqref{eqn:ZS0} as
\begin{equation}
\frac{\d^2v}{\d y^2}+\frac{2}{y}\frac{\d v}{\d y}+\frac{k^2-\tilde{u}(y)}{y^4}v=0,
\label{eqn:v}
\end{equation}
where $\tilde{u}(y)=u(1/y)$.
Since $(k^2-\tilde{u}(y))/y^4$ has a pole of fourth order at $y=0$,
 we see by a standard result on higher-order scalar linear differential equations
 (see, e.g., Theorem~7 of \cite{B00}) that
 Eq.~\eqref{eqn:ZS0} has an irregular singularity at infinity $x=\infty$.
To obtain a necessary condition for its integrabilty,
 we compute its formal monodromy, exponential torus and Stokes matrices
  around $y=0$ for \eqref{eqn:v}.
See Appendix~A for necessary information,
 and see also \cite{MS16,PS03,S09} for more details.

Letting $\eta=(\d v/\d y)/v$ in \eqref{eqn:v}, we have a Riccati equation
\begin{equation}
\frac{\d\eta}{\d y}+\eta^2+\frac{2}{y}\eta+\frac{k^2-\tilde{u}(y)}{y^4}=0.
\label{eqn:eta}
\end{equation}
To obtain a formal solution to \eqref{eqn:eta}, we write
\[
\eta=\sum_{j=0}^\infty \eta_jy^{j-2}.
\]
Substituting the above expression into \eqref{eqn:eta}, we have
\[
\eta_0^2+k^2=0,\quad
2\eta_0\eta_1=0,\quad
2\eta_0\eta_j+r_j(\eta_0,\ldots,\eta_{j-1})=0,\quad
j\ge 2,
\]
where $r_j(\eta_0,\ldots,\eta_{j-1})$ is a polynomial of $\eta_0,\ldots,\eta_{j-1}$ for $j\ge 2$.
Thus, we obtain formal solutions to \eqref{eqn:eta} as
\[
\eta(y)=\pm\frac{ik}{y^2}+\cdots
\]
and to \eqref{eqn:v} as
\begin{equation}
v(y)=\exp\left(\mp\frac{ik}{y}\right)v_\pm(y),
\label{eqn:solv}
\end{equation}
where $v_\pm(y)$ are formal power series in $y$,
 and the upper or lower signs are taken simultaneously.
Hence, we express have a formal fundamental matrix of \eqref{eqn:v} as
\begin{equation}
V(y)=\tilde{V}(y)e^{Q(y)},
\label{eqn:V}
\end{equation}
where $\tilde{V}(y)$ is a $2\times 2$ formal meromorphic invertible matrix of power series in $y$ and 
\begin{align*}
Q(y)=
\begin{pmatrix}
-ik/y & 0\\
0 & ik/y
\end{pmatrix}.
\end{align*}

\begin{figure}
\includegraphics[scale=0.6]{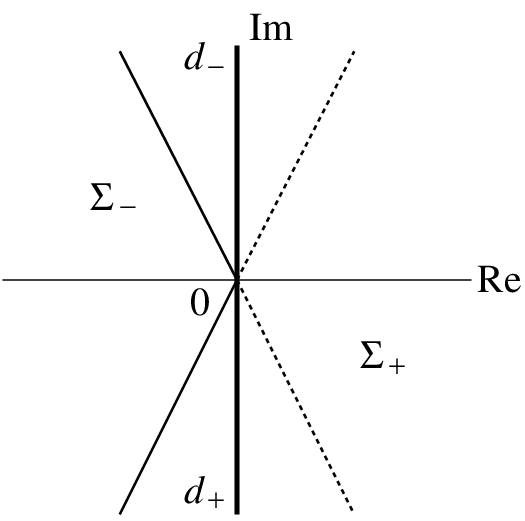}
\caption{Singular directions $d_\pm$ and sectors $\Sigma_\pm$:
The boundaries of $\Sigma_+$ and $\Sigma_-$ are plotted as solid and dotted lines..
\label{fig:1}}
\end{figure}

Substitution of $ye^{2\pi i}$ into \eqref{eqn:V} yields
\[
V(ye^{i\theta})=\tilde{V}(ye^{2\pi i})e^{Q(ye^{2\pi i})},
\]
so that the \emph{formal monodromy} becomes
\[
\hat{M}=V(y)^{-1}V(ye^{2\pi i})=\id
\]
by $V(ye^{2\pi i})=V(y)\hat{M}$.
The \emph{exponential torus} is given by
\[
\T=\T:=\mathrm{Gal}\left(\Cset((y))(e^{ik/y},e^{-ik/y})/\Cset((y))\right)
=\left\{
\begin{pmatrix}
c & 0\\
0 & c^{-1}
\end{pmatrix}
\bigg| c\in\Cset^\ast\right\}.
\]
Here $\mathrm{Gal}(\Lset/\Kset)$ denotes the differential Galois group,
 where $\Kset$ and $\Lset$ are differential fields
 such that $\Lset$ is a differential field extension of  $\Kset$.
The \emph{Stokes directions},
 on which $\Re(\mp ik/y)=0$, are given by $\arg y=0,\pi$.
We have $\Re(-ik/y)<0$ and  $\Re(ik/y)<0$ for $\arg y\in(0,\pi)$ and $(\pi,2\pi)$, respectively.
So the \emph{singular directions} $d_\mp$,
 which are bisectors of $(\tfrac{1}{2}\pi\mp\tfrac{1}{2}\pi,\tfrac{3}{2}\pi\mp\tfrac{1}{2}\pi)$,
 become $\arg y=\pi\mp\tfrac{1}{2}\pi$.
See Fig.~\ref{fig:1}.
By Proposition ~\ref{prop:a1}, we can write the Stokes matrices as
\begin{equation}
S_-=\begin{pmatrix}
1 & \alpha_-\\
0 & 1
\end{pmatrix}
\quad\mbox{and}\quad
S_+=\begin{pmatrix}
1 & 0\\
\alpha_+ & 1
\end{pmatrix}
\label{eqn:Stokes}
\end{equation}
for the singular directions $d_-$ and $d_+$, respectively,
 where $\alpha_\pm$ are constants which may be zero.

Suppose that $a(k),a(-k)\neq 0$.
Let
\begin{align*}
&
\Phi(y;k)=\begin{pmatrix}
\phi(y^{-1};k) & \phi(y^{-1};-k)\\
-y^{-2}\phi'(y^{-1};k) & -y^{-2}\phi'(y^{-1};-k)
\end{pmatrix},\\
&
\Psi(y;k)=\begin{pmatrix}
\psi(y^{-1};-k) & \psi(y^{-1};k)\\
-y^{-2}\psi'(y^{-1};-k) & -y^{-2}\psi'(y^{-1};k)
\end{pmatrix},
\end{align*}
which are fundamental matrices to \eqref{eqn:v}.
By \eqref{eqn:ab+} and \eqref{eqn:ab-},
\begin{equation}
\Phi(y;k)=\Psi(y;k)B(k),\quad
B(k)=\begin{pmatrix}
a(k) & b(-k)\\
b(k) & a(-k)
\end{pmatrix}.
\label{eqn:thm2}
\end{equation}
 
\begin{lem}
\label{lem:4a}
If $\alpha_-$ and $\alpha_+=0$, then $b(-k)$ and $b(k)=0$, respectively.
\end{lem}

\begin{proof}
We prove that if $\alpha_-=0$, then $b(-k)=0$.
The remaining part can be proven similarly.

First, by \eqref{eqn:bc}, we can write the formal fundamental matrix as
\[
V(y)=\left(\begin{array}{cc}
\begin{matrix}
*\\
*
\end{matrix}\quad
\Phi(y;k)\begin{pmatrix}
0\\
c_-
\end{pmatrix}
\end{array}\right)
\quad\mbox{and}\quad
\left(\begin{array}{cc}
\begin{matrix}
*\\
*
\end{matrix}\quad
\Psi(y;k)\begin{pmatrix}
0\\
c_+
\end{pmatrix}
\end{array}\right)
\]
in the directions $d_\l$ and $d_\r$ near $d_-$ in $\Sigma_-$ and $\Sigma_+$, respectively,
 where $c_\pm\in\Cset$ are nonzero constants.
If $\alpha_-=0$, then we use \eqref{eqn:Stokes} and  \eqref{eqn:thm2} to obtain
\[
b(-k)c_-=0,\quad
a(-k)c_--c_+=0,
\]
by the definition of the Stokes matrix.
Thus, we obtain the desired result.
\end{proof}

Theorem~\ref{thm:a1} implies that the differential Galois group $G$ of \eqref{eqn:ZS0} contains
 the Zariski closure of a group generated by the formal monodromy $\hat{M}=\id$,
 the exponential torus $\T$ and the Stokes matrices $S_\pm$.
In particular, $G$ is not finite,
 i.e., cases (i) and (ii) of Proposition~\ref{prop:2b} do not occur.
Moreover, case~(iv) of Proposition~\ref{prop:2b} does not occur
 since $a(k)\neq 0$.
Hence, if Eq.~\eqref{eqn:ZS0} is integrable in the sense of differential Galois theory,
 then we have have $\alpha_-$ or $\alpha_+=0$, so that by Lemma~\ref{lem:4a} $b(k)$ or $b(-k)=0$,
 since either case~(iii) or (v) of Proposition~\ref{prop:2b} occurs.
So we have the following.

\begin{lem}
\label{prop:4a}
Let $k\in\Rset^\ast$ and assume that $a(k)\neq 0$.
If Eq.~\eqref{eqn:ZS0} is solvable by quadrature, then $b(k)$ or $b(-k)=0$.
\end{lem}

\begin{proof}[Proof of Theorem~$\ref{thm:main2}$]
Suppose that Eq.~\eqref{eqn:ZS0} is solvable by quadrature.
Since by Proposition~\ref{prop:2a}(iii) $a(k)=0$ only at discrete points,
 it follows from Lemma~\ref{prop:4a} that $b(k)$ or $b(-k)=0$ except at the discrete points.
Hence, by the identity theorem (e.g., Theorem~3.2.6 of \cite{AF03}),
  we have $b(k)=0$ for any $k\in\Rset^\ast$
  since by Proposition~\ref{prop:2a}(i) $b(k)$ is analytic
  in a neighborhood of $\Rset^\ast$ in $\Cset^\ast$.
Thus, we complete the proof.
\end{proof}


\section{Proof of Theorem~\ref{thm:main3}}

In this section we give a proof of Theorem~\ref{thm:main3}.
The proof is mainly relied to the result of Kovacic \cite{K86}, which is briefly described in Appendix~B.

\begin{proof}[Proof of Theorem~$\ref{thm:main3}$]
Henceforth we assume that $m_2-m_1=1$.
Obviously, case (c) of Proposition~\ref{prop:K2} does not occur for $k\in\Rset^\ast$. 
So we only have to show that cases (a) and (b) of Proposition~\ref{prop:K1} do not occur
 for some $k\in\Rset^\ast$.
We rewrite \eqref{eqn:ZS0} in the form \eqref{eqn:req} with
\begin{equation}
r(x)=-k^2-u(x)
\label{eqn:sec5a}
\end{equation}
and expand $u(x)$ in the Laurent series at $x=\infty$ as
\[
u(x)=\frac{\bar{u}_1}{x}+\cdots,
\]
where $\bar{u}_1\neq 0$ is a constant.
Let $\S$ denote the set of poles of $u(x)$.

We assume that case (a) of Proposition~\ref{prop:K1} occurs
 as well as $k\in\Rset^\ast$.
Following the recipe of Appendix~B2,
 we have $d_c\in\Nset_0:=\Nset\cup\{0\}$ in \eqref{eqn:dc} and 
\[
\kappa_\infty^\pm=\pm\frac{i\bar{u}_1}{2k}
\]
since $\alpha=ik$ and $\beta=-\bar{u}_1$.
We easily see that $d_c\not\in\Nset_0$ when $k$ changes slightly
 even if $d_c\in\Nset_0$for some $k\in\Rset^\ast$ .
This implies that case~(a) of Proposition~\ref{prop:K1} does not occur
 in an open interval of $k\in\Rset^\ast$ since $\S$ is finite
 so that the number of a finite sequence $c$ of the symbols $\pm$ with $d_c\in\Nset_0$ is finite.

We next assume that case (b) of Proposition~\ref{prop:K1} occurs
 for an open interval of $k\in\Rset^\ast$.
Following the recipe of Appendix~B3,
 we have $d_e\in\Nset_0$ in \eqref{eqn:de}
 and there exists a monic polynomial $P(x)$ of order $d_e$ satisfying \eqref{eqn:P}.
Since
\[
\lim_{x\to\pm\infty}
(\theta''(x)+3\theta(x)\theta'(x)+\theta(x)^3+4(k^2+u(x))\theta(x)+2u'(x))=4k^2,
\]
we easily see that $d_e\neq 0$.
Let $d=d_e\ge 1$.
Since Eq.~\eqref{eqn:P} holds, we see that
\begin{equation}
\begin{split}
&
(\theta''(x)+3\theta(x)\theta'(x)+\theta(x)^3+4(k^2+u(x))\theta(x)+2u'(x)),\\
&
(\theta''(x)+3\theta(x)\theta'(x)+\theta(x)^3+4(k^2+u(x))\theta(x)+2u'(x))x\\
&\qquad
+(3\theta(x)^2+3\theta'(x)+4(k^2+u(x))),\\
&
(\theta''(x)+3\theta(x)\theta'(x)+\theta(x)^3+4(k^2+u(x))\theta(x)+2u'(x))x^2\\
&\quad
+2(3\theta(x)^2+3\theta'(x)+4(k^2+u(x)))x+6\theta(x),\\
&
(\theta''(x)+3\theta(x)\theta'(x)+\theta(x)^3+4(k^2+u(x))\theta(x)+2u'(x))x^j\\
&\quad
+j(3\theta(x)^2+3\theta'(x)+4(k^2+u(x)))x^{j-1}+3j(j-1)\theta(x)x^{j-2}\\
&\quad
+j(j-1)(j-2)x^{j-3},\quad
j=3,\ldots,d,
\end{split}
\label{eqn:prop2c1}
\end{equation}
are linearly dependent, on the interval of $k$,
 where the last equation is eliminated if $d=2$,
 and the last two equations are eliminated if $d=1$.
Hence,
\[
\theta(x),\quad
(\theta(x)x+j)x^{j-1},\quad
j=1,\ldots,d,
\]
are linearly dependent.
If $d=1$, then
\[
(\theta(x)x+1)+q_0\theta(x)
=\theta(x)(x+q_0)+1=0.
\]
If $d\ge 2$, then
\[
\theta(x),\quad
(\theta(x)x+j)x^{j-1},\quad
j=1,\ldots,d-1,
\]
are linearly independent, so that for certain constants $q_j$, $j=0,\ldots,d-1$,
\begin{align*}
&
(\theta(x)x+d)x^{d-1}+\sum_{j=1}^{d-1}q_j(\theta(x)x+j)x^{j-1}+q_0\theta(x)\\
&=\theta(x)\sum_{j=0}^d q_jx^j+\sum_{j=1}^d jq_jx^{j-1}=0,
\end{align*}
where $q_d=1$.
Thus, we have
\begin{equation}
\theta(x)=-\frac{q'(x)}{q(x)},\quad
q(x)=\sum_{j=0}^d q_jx^j,
\label{eqn:prop2c2}
\end{equation}
and
\begin{equation}
P(x)=\sum_{j=0}^dq_jx^j=q(x),
\label{eqn:prop2c3}
\end{equation}
since the functions given by \eqref{eqn:prop2c1}
 are linearly dependent on the interval of $k$.
Substituting \eqref{eqn:prop2c2} and \eqref{eqn:prop2c3} into \eqref{eqn:P},
 we obtain
 \[
q(x)u'(x)+4q'(x)u(x)=0,
 \]
which yields
\[
u(x)=Cq(x)^{-4},
\]
where $C\neq 0$ is a constant.
This means that $m_2-m_1=-4$ and yields a contradiction.
Thus we complete the proof.
\end{proof}

\begin{rmk}
In the above proof,
 an essential role was played
 by the fact that poles of $r(x)$ given by \eqref{eqn:sec5a} do not depend on $k$.
\end{rmk}

\begin{ex}
\label{ex:2b}
Consider the rational potential
\[
u(x)=-\frac{5}{16x^2}-\frac{5}{16(x-1)^2}-\frac{7}{8x}+\frac{5}{24(x-1)}
\]
and let $r(x)=-k^2-u(x)$.
Equation~\eqref{eqn:ZS0} satisfies $m_2-m_1=1$ 
 and only has singularities of order two at $x=0$ and $1$.
Following the recipe of Appendix~{\rm B.2}, we have $\beta=5/16$ and
\[
\kappa_s^\pm=\tfrac{1}{2}\pm\tfrac{3}{4}
\]
for $s=0,1$, and
\[
\kappa_\infty^\pm=\mp\frac{19i}{48k}
\]
since
\[
\bar{u}_1=\lim_{x\to\infty}xu(x)=-\tfrac{19}{24}.
\]
Hence, case {\rm(a)} of Proposition~{\rm\ref{prop:K1}} does not occur
 for any $k\in\Rset^\ast$.
Following the recipe of Appendix~{\rm B.3}, we have $\beta=5/16$ and
\[
E_s=\{-1,2,5\}
\]
for $s=0,1$.
We see that $d_e=1$ for $e_0,e_1=-1$ is only a non-negative integer, so that
\[
\theta(x)=-\frac{1}{2x}-\frac{1}{2(x-1)}.
\]
Substituting $P(x)=x+p_0$ into \eqref{eqn:P}, we obtain
\begin{align*}
&
\frac{1}{x^2(x-1)^3}
\bigl(-(k^2p_0+2k^2-\tfrac{4}{3})x^4+(2(5k^2+2)p_0+4k^2-1)x^3\\
&
-\tfrac{1}{3}(8(3k^2-5)p_0+2(3k^2+5))x^2
+\tfrac{1}{3}(2(3k^2+20)p_0+4)x+4p_0+1\bigr)=0.
\end{align*}
Hence, case~{\rm(b)} of Proposition~{\rm\ref{prop:K1}} holds with $p_0=-\tfrac{1}{4}$,
 so that Eq.~\eqref{eqn:ZS0} is solvable by quadrature,
 if and only if $k=\pm\tfrac{2}{3}\sqrt{3}$.

Let $k=\pm\tfrac{2}{3}\sqrt{3}$.
We have
\[
\hat{\theta}(x)=-\frac{1}{2x}-\frac{1}{2(x-1)}+\frac{4}{4x-1},
\]
so that the integrand in \eqref{eqn:solreq} satisfies
\[
\tfrac{1}{2}\left(-\hat{\theta}(x)
 \pm\sqrt{-4(k^2+u(x))-\hat{\theta}(x)^2-2\hat{\theta}'(x)}\right)
 \to \pm ik\left(1-\frac{19}{48k^2x}\right)+O(x^{-2})
\]
as $|x|\to\infty$ on $\Rset$.
This implies that Eq.~\eqref{eqn:ZS0} has two linearly independent solutions such that
\[
v(x)\to x^{-19i/48k}e^{ik x}\mbox{ and }x^{19i/48k}e^{-ikx}\quad\mbox{as $x\to\pm\infty$}.
\]
Thus, Eq.~\eqref{eqn:ZS0} does not have the Jost solutions satisfying \eqref{eqn:bc}
 even when it is solvable by quadrature.
\end{ex}

\section*{Acknowledgements}
This work was partially supported by the JSPS KAKENHI Grant Number JP22H01138.

\section*{Data Availability}
Data sharing not applicable to this article
 as no datasets were generated or analyzed during the current study.
 
 \section*{Conflict of Interest Statement}
 The author has no conflict of interest to disclose.


\appendix
\renewcommand{\theequation}{\Alph{section}.\arabic{equation}}

\section{Local Differential Galois Groups near Irregular Singularities}
 
In this appendix we review basic information on local differential Galois groups,
 which consist of formal monodromies, exponential tori and Stokes matrices,
 near irregularity singular points.
See \cite{MS16,PS03,S09} for the details.
Recall some terminologies of the differential Galois theory in Section~3 of \cite{Y23}
  (see also \cite{CH11,PS03}) if necessary in the following.

Consider $n$-dimensional linear systems of the form
\begin{equation}
\frac{\d\zeta}{\d y}=A(y)\zeta
\label{eqn:eta1}
\end{equation}
with an irregular singularity at $y=0$ in $\Cset$, where $A(y)\in\gl(n,\Cset((y)))$,
 and $\Cset((y))$ denotes the field of the formal Laurent series over $\Cset$.
In a standard way,
 Eq.~\eqref{eqn:v} can be written in the form \eqref{eqn:eta1} easily.
The linear system~\eqref{eqn:eta1} has a formal fundamental matrix of the form
\begin{equation}
Y(y)=\hat{Y}(y)y^Le^{Q(1/z)}
\label{eqn:Y1}
\end{equation}
like \eqref{eqn:V}, where
\begin{itemize}
\setlength{\leftskip}{-2.4em}
\item
$y^\ell=z$ for some $\ell=1,\ldots,n$;
\item
$\hat{Y}(y)\in\GL(n;\Cset((y)))$;
\item
$L\in\gl(n;\Cset)$;
\item
$Q(1/z)=\diag(q_1(z),\ldots,q_n(z))$, $q_j(z)\in\frac{1}{z}\Cset[\frac{1}{z}]$
\end{itemize}
(see, e.g., \cite{L01}).
So we obtain a formal Picard-Vessiot extension $\Lset\subset\Kset=\Cset((y))$,
 where $\Lset$ is a differential field generated by entries of $Y(y)$
 and having $\Cset$ as its field of constants.

Let $\hat{\mu}$ be a $\Kset$-automorphism of $\Lset$ satisfying
\begin{itemize}
\setlength{\leftskip}{-2.4em}
\item
$\hat{\mu}(y^{\alpha_j})=e^{2\pi i\alpha_j}y^{\alpha_j}$ for any $j=1,\ldots,n$;
\item
$\hat{\mu}(\log y)=\log y+2\pi i$;
\item
$\hat{\mu}(\exp(q_j(y^{1/\ell})))=\exp(q_j(e^{2\pi i/\ell}y^{1/\ell})$,
\end{itemize}
where $\alpha_j$, $j=1,\ldots,n$, denote the eigenvalues of $L$.
Since $\hat{\mu}$ commutes the differentiation,
 $\hat{\mu}(Y(y))$ is also a formal fundamental matrix.
Hence, there exists a matrix $\hat{M}\in\GL(n;\Cset)$ such that
\[
\hat{\mu}(Y(y))=Y(y)\hat{M}.
\]
We call $\hat{M}$ the \emph{formal monodromy} of \eqref{eqn:eta1}.
Thus, $\hat{M}$ can be computed by substituting $ye^{i\theta}$ into $Y(y)$, as in Section~4.

Let $\Eset$ be a differential field extension of $\Kset$
 such that $\Lset=\Eset(e^{q_1(y^{1/\ell})},\ldots,e^{q_n(y^{1/\ell})})$.
Then $\T=\mathrm{Gal}(\Lset/\Eset)$
 is called the \emph{exponential torus} of \eqref{eqn:eta1}.
Thus, $\T$ is the group of all $\Eset$-automorphisms of $\Lset$.

We next state the definition of the \emph{Stokes matrix} for \eqref{eqn:eta}.
Some preparations are needed to this end.
Let $q(z)\in z^{-1}\Cset[z^{-1}]$,
 so that $q(z^{-1})$ is a polynomial of $z$.
Let $k$ be the degree of $q(z^{-1})$
 and let $q(z)=c_0z^{-k}+\ldots$, where $c_0\in\Cset$ is a constant.
The direction $d$ determined by $\Re(c_0y^{-k/\ell})=0$, 
 in which $e^{q(z)}$ changes its behavior, 
 is called the \emph{Stokes direction} for $q(z)$.
Let $d_1,d_2$ be consecutive Stokes directions.
We say that $(d_1,d_2)$ is a \emph{negative Stokes pair}
 if $\Re(c_0y^{-k/\ell})<0$ for $\arg(y)\in(d_1,d_2)$.
We refer to the bisector a negative Stokes pair
 as a \emph{singular direction} of $q(z)$.

\begin{figure}
\includegraphics[scale=0.75]{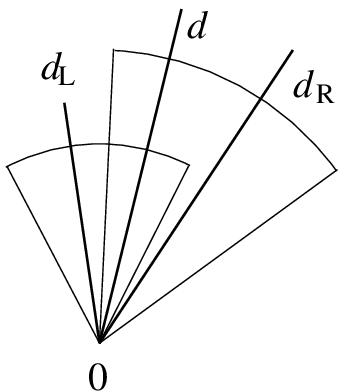}
\caption{Singular direction $d$ and the two directions $d_\l$ and $d_\r$.
\label{fig:a}}
\end{figure}

We now consider the formal fundamental matrix \eqref{eqn:Y1}.
Let $d$ be a singular direction of $q_j(z)-q_l(z)$ for some $j,l=1,\ldots,n$ with $j\neq l$.
Choose two directions $d_\l$ and $d_\r$ counterclockwise nearby
 such that the associated sectors overlap and contain $d$.
See Fig.~\ref{fig:a}.
Let $\hat{Y}_\l(y)$ and $\hat{Y}_\r(y)$ be the multisums  of $\hat{Y}(y)$
 in the directions $d_\l$ and $d_\r$, respectively.
Then there exists $S_{jl}\in\GL(n;\Cset)$ such that
\begin{equation}
\hat{Y}_\l(y)y^Le^{Q(1/z)}=\hat{Y}_\r(y)y^Le^{Q(1/z)}S_{jl}
\label{eqn:app}
\end{equation}
since Eq.~\eqref{eqn:Y1} gives a formal fundamental matrix
 for both cases.
The matrix $S_{jl}$ is independent of the choice of  $d_\l$ and $d_\r$
 and called a \emph{Stokes matrix} for the singular direction $d$.
We have the following on the Stokes matrices
 (see, e.g., Proposition~2.37 of \cite{MS16} for its proofs).

\begin{prop}
\label{prop:a1}
The Stokes matrices are unipotent such that all the diagonal elements are one
 and the $(j',l')$-element is zero if $d$ is a singular direction for $q_{j'}(z)-q_{l'}(z)$.
\end{prop}

Finally, we state one of the most important results on local differential Galois groups
 near irregular singularities as follows.
  
\begin{thm}[Ramis \cite{R85,R07}]
\label{thm:a1}
The local differential Galois group of \eqref{eqn:eta} around the irregular singular point $y=0$
  is the Zariski closure of the group generated the formal monodromy $\hat{M}$,
  the exponential torus $\T$ and the Stokes matrices $S_{jl}$, $j,l=1,\ldots,n$ with $j\neq l$.
\end{thm}

See also \cite{L94,PS03} for a proof of Theorem~\ref{thm:a1}.
Theorem~\ref{thm:a1} is regarded as a generalization of Schlesinger's theorem (see, e.g., \cite{PS03,Y23})
 for regular singularities to irregular ones.

\section{Kovacic's Result}

In this appendix we review necessary parts of Kovacic \cite{K86}
 on second-order linear differential equations on $\Cset(x)$.
 
\subsection{Fundamental results}
Consider second-order linear differential equations of the form
\begin{equation}
v_{xx}=r(x)v,
\label{eqn:req}
\end{equation}
where $r(x)\in\Cset(x)\setminus\Cset$.
Let $G$ be the differential Galois group of \eqref{eqn:req} over $\Cset(x)$.
We easily see that $G$ is an algebraic subgroup of $\SL(2,\Cset)$
 (see, e.g., Chapter~6 of \cite{K76}).
We have the following four cases for \eqref{eqn:req} (see \cite{CH11,K86} for a proof).

\begin{prop}[Kaplansky \cite{K76}, Kovacic\cite{K86}]
\label{prop:K1}
One of the following four cases occurs$\,:$
\begin{enumerate}
\setlength{\leftskip}{-1.8em}
\item[(a)]
$G$ is triangularizable$\,;$
\item[(b)]
$G$ is conjugate to a subgroup of
\[
D^\dagger=\left\{
\begin{pmatrix}
c & 0\\
0 & c^{-1}
\end{pmatrix}
\bigg|
\,c\in\Cset^\ast\right\}\cup
\left\{
\begin{pmatrix}
0 & c\\
-c^{-1} & 0
\end{pmatrix}
\bigg|
\,c\in\Cset^\ast\right\},
\]
but it is not triangularizable$\,;$
\item[(c)]
$G$ is finite, but neither triangulariable nor conjugate to $D^\dagger;$ 
\item[(d)]
$G=\SL(2,\Cset)$.
\end{enumerate}
\end{prop}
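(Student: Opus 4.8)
The statement is purely about the possible differential Galois groups $G$ of \eqref{eqn:req}, and as already noted $G$ is an algebraic subgroup of $\SL(2,\Cset)$ (the absence of a first-derivative term forces the Wronskian, hence the determinant of any fundamental matrix, to be constant). The plan is therefore to deduce the four-case alternative directly from the finer six-type classification of algebraic subgroups of $\SL(2,\Cset)$ recorded in Proposition~\ref{prop:2b}, regrouping its types according to two conjugacy invariants: whether $G$ is triangularizable (equivalently, admits a $G$-invariant line in $\Cset^2$) and whether $G$ is conjugate to a subgroup of $D^\dagger$.

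First I would run through the six types of Proposition~\ref{prop:2b} and decide, for each, which of (a)--(d) it belongs to. Types (ii), (iii) and (v) consist of lower-triangular matrices and hence fix the line $\langle(0,1)^{\top}\rangle$; they are triangularizable and fall under case~(a). Type~(vi) is all of $\SL(2,\Cset)$ and is case~(d). Type~(iv) is exactly $D^\dagger$: besides the diagonal torus it contains the antidiagonal involutions $\begin{pmatrix}0 & -\beta^{-1}\\ \beta & 0\end{pmatrix}$, which interchange the two coordinate axes -- the only lines preserved by the torus -- so no line is $G$-invariant; thus such a $G$ is irreducible yet conjugate to a subgroup of $D^\dagger$, i.e.\ case~(b). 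The finite groups are precisely those of type~(i), and such a $G$ is sorted into (a), (b) or (c) according to whether it fixes a line, is irreducible but conjugate to a subgroup of $D^\dagger$, or neither.

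Finally I would check that the four cases are disjoint and exhaustive. Disjointness is immediate from the exclusion clauses in (b) and (c). For exhaustiveness the key point is that the only \emph{irreducible infinite} types in Proposition~\ref{prop:2b} are (iv)~$=D^\dagger$ and (vi)~$=\SL(2,\Cset)$: consequently any $G$ that is simultaneously non-triangularizable, not conjugate to a subgroup of $D^\dagger$, and not all of $\SL(2,\Cset)$ is forced to have $G^0=\{\id\}$ and hence to be finite, landing in case~(c). This last implication is the substantive step, and it is exactly the content supplied by Proposition~\ref{prop:2b}; without that input one would have to reprove the classification from scratch -- enumerating the Lie subalgebras of $\mathfrak{sl}(2,\Cset)$ and invoking the self-normalizing property of the Borel together with the fact that the normalizer of the diagonal torus is $D^\dagger$ -- which is where the real difficulty lies. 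Given Proposition~\ref{prop:2b}, the remaining verifications (the reducibility of each type and its membership in $D^\dagger$) are routine.
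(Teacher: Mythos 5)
Your proof is correct, but be aware that the paper itself gives no proof of Proposition~\ref{prop:K1}: it is quoted as a theorem of Kaplansky and Kovacic, with \cite{CH11,K86} cited for the proof. What you do instead is deduce the four-case alternative from Proposition~\ref{prop:2b}, and the deduction is sound. Types (ii), (iii) and (v) consist of lower-triangular matrices, so they fall under (a); type (vi) is (d); the group $D^\dagger$ of type (iv) is irreducible, since its antidiagonal elements exchange the two coordinate axes, which are the only lines invariant under the full diagonal torus, so it falls under (b); and type (i) consists precisely of the finite subgroups (an algebraic group has finitely many connected components, so $\G^0=\{\id\}$ forces finiteness), which you correctly distribute among (a), (b) and (c) according to the two invariants. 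Since triangularizability and conjugacy into $D^\dagger$ are conjugation-invariant and Proposition~\ref{prop:2b} classifies up to conjugacy, exhaustiveness follows; disjointness is built into the exclusion clauses together with the fact that $\SL(2,\Cset)$ is infinite, irreducible and not conjugate into the one-dimensional group $D^\dagger$. The trade-off between the two routes is this: your argument is internal to results the paper already states, but it transfers the entire logical burden onto Proposition~\ref{prop:2b}, which the paper also quotes without proof (from Section~2.1 of \cite{M99}), whereas the sources \cite{K86,CH11} prove the four-case alternative directly (via invariant lines, the Lie--Kolchin theorem and normalizer arguments) --- essentially the same work required to prove Proposition~\ref{prop:2b}, so nothing is gained or lost mathematically, only repackaged. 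One small caveat: as literally stated, type (ii) of Proposition~\ref{prop:2b} (all roots of unity $\lambda$) is not Zariski closed and must be read as allowing a finite group of roots of unity; this does not affect your argument, which uses only that type-(ii) matrices are lower triangular.
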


If one of cases~(a)-(c) of Proposition~\ref{prop:K1} occurs,
 then the connected identity component $G^0$ of $G$ is solvable
 and Eq.~\eqref{eqn:req} is solvable.
On the other hand, if cases~(a)-(c) do not occur,
 then $G=G^0=\SL(2,\Cset)$, i.e.,
 Eq.~\eqref{eqn:req} is not integrable in the sence of  differential Galois theory.

Let $r(x)=r_1(x)/r_2(x)$, where $r_1(x),r_2(x)\in\Cset[z]$ are relatively prime.
Poles of $r(x)$ are zeros of $r_2(x)$
 and their orders are the multiplicities of the zeros of $r_2(x)$.
Let $\S\subset\Cset$ denote the set of all poles of $r(x)$.
By the \emph{order of $r(x)$ at $\infty$},
 we also mean the order of $\infty$ as a zero of $r(x)$, i.e., $\deg r_2(x)-\deg r_1(x)$.
We have the following necessary conditions for cases (a)-(c) in Proposition~\ref{prop:K1} to occur
 (see \cite{CH11,K86} for a proof).

\begin{prop}[Kovacic \cite{K86}]
\label{prop:K2}
The following conditions are necessary
 for the corresponding cases in Proposition~$\ref{prop:K1}$ to occur$\,:$
\begin{itemize}
\setlength{\leftskip}{-1.8em}
\item[(a)]
No pole of $r(x)$ is of odd order greater than one,
 and the order of $r(x)$ at $\infty$ is not an odd less than two$\,;$
\item[(b)]
There exists a pole of $r(x)$ such that its order is two or an odd greater than two$\,;$
\item[(c)]
No pole of $r(x)$ is of order greater than two
 and the order of $r(x)$ at $\infty$ is greater than one.
\end{itemize}
\end{prop}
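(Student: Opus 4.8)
The plan is to convert each case of Proposition~\ref{prop:K1} into a statement about solutions of the Riccati equation $\omega'+\omega^2=r$ attached to \eqref{eqn:req}, and then to extract the pole-order constraints by a local order-balance at each pole of $r$ and at $x=\infty$. I will use the standard equivalences: case~(a) (triangularizable) holds iff this Riccati equation has a solution $\omega\in\Cset(x)$; case~(b) holds iff it has a solution algebraic of degree exactly two over $\Cset(x)$; and case~(c) forces the Galois group $G$ to be finite, so that every solution of \eqref{eqn:req} is algebraic over $\Cset(x)$.

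For condition~(a) I would analyse a rational solution $\omega$ locally. At a finite pole $c$ of $r$, if $\omega$ has a pole of order $m\ge1$ there then $\omega^2$ has order $2m$ while $\omega'$ has order $m+1$, so $r$ has a pole of order $2m$ (even) when $m\ge2$, and of order $1$ or $2$ when $m=1$; a regular $\omega$ cannot match a pole of $r$. Hence no rational $\omega$ exists at a pole of $r$ of odd order greater than one, which is the first half of~(a). At infinity I would write $\omega\sim a x^{q}$ and balance $\omega'+\omega^2\sim r\sim C x^{-\delta}$: the subcases $q\ge1$, $q=0$ and $q\le-1$ realise exactly the values with $\delta$ even or with $\delta$ odd and $\delta\ge3$, so the orders excluded for~(a) are precisely the odd integers $\le1$, i.e. the order of $r$ at $\infty$ is not an odd integer less than two.

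The main work is condition~(b). Letting $y_1,y_2$ span the invariant pair of lines, I would pass to the symmetric square: $P:=y_1y_2$ solves $P'''-4rP'-2r'P=0$, its logarithmic derivative $\theta:=P'/P$ lies in $\Cset(x)$, and with the constant Wronskian $W$ of $y_1,y_2$ the function $\phi:=W/P$ satisfies
\[
\phi^2=4r-2\theta'-\theta^2\in\Cset(x),\qquad \theta=-\tfrac12(\phi^2)'/\phi^2 .
\]
Case~(b) (as opposed to~(a)) is exactly the statement $\phi\notin\Cset(x)$, i.e. $\phi^2$ is not a perfect square in $\Cset(x)$, equivalently $\phi^2$ has a zero or pole of odd order somewhere on $\Pset^1$. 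The second identity shows $\theta$ has only simple poles, so at a point where $\phi^2$ has order $m$ the term $2\theta'+\theta^2$ contributes $\tfrac14 m(m+4)(x-c)^{-2}$; feeding this into $4r=\phi^2+2\theta'+\theta^2$ and matching orders shows that at an ordinary point or a simple pole $\phi^2$ has even order, that at a pole of order $\nu\ge3$ one must have $m=-\nu$ (so the parity of $m$ equals that of $\nu$), and that only at a double pole can $m$ be odd. Consequently every odd-order point of $\phi^2$ lies at $\infty$ or at a finite pole of $r$ of order two or odd order greater than two. Since the divisor of $\phi^2$ has degree zero, the number of its odd-order points is even, hence at least two, and as $\infty$ accounts for at most one of them, at least one finite pole of order two or odd order greater than two must occur. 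This is condition~(b).

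For condition~(c), finiteness of $G$ makes every solution of \eqref{eqn:req} algebraic, so the local monodromy at each singular point has finite order and no irregular singular point is allowed. A finite pole of $r$ gives a regular singular point iff its order is at most two; under the substitution $x=1/t$ the coefficient of $v$ acquires a pole at $t=0$ of order $4-\delta$, so $\infty$ is regular singular iff $\delta\ge2$. These are the two requirements in~(c). I expect cases~(a) and~(c) to be essentially routine local computations; the delicate point, carrying the real content, is case~(b)---specifically the fact that the global solutions $y_1,y_2$ need not coincide with the local Frobenius solutions, so that the order of $\phi^2$ at a double pole is not rigidly the square of the local exponent difference but is governed by the above balance, which is precisely what permits an odd order (genuine branching) there.
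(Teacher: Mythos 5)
The paper offers no proof of Proposition~\ref{prop:K2} to compare against: it is stated as Kovacic's result with the proof delegated to \cite{CH11,K86} (and with case~(c) deliberately weakened relative to Kovacic's original, as the paper itself notes). Measured against those sources, your argument is essentially the standard one: case~(a) via a rational solution $\omega$ of the Riccati equation $\omega'+\omega^2=r$ and local order balancing at finite poles and at $\infty$, case~(b) via the invariant pair of lines, $P=y_1y_2$, and the identity $\phi^2=4r-2\theta'-\theta^2$, case~(c) via algebraicity of all solutions. Your packaging of case~(b) through the divisor of $\phi^2$ --- degree zero on $\Pset^1$, so odd-order points come in pairs, and $\infty$ accounts for at most one of them --- is a clean way to force a finite pole of order two or odd order greater than two. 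The local computations check out: when $\phi^2$ has order $m$ at $c$, the $(x-c)^{-2}$-coefficient of $2\theta'+\theta^2$ is $\tfrac14 m(m+4)$, which forces $m\in\{0,-2\}$ at ordinary points, $m=-2$ at simple poles, $m=-\nu$ at poles of order $\nu\ge 3$, and at double poles allows $m=-2\pm2\sqrt{1+4\beta}$, consistent with the sets $E_s$ of Appendix~A.3. (Your clause ``only at a double pole can $m$ be odd'' is loose --- $m=-\nu$ is also odd at odd-order poles $\nu\ge3$ --- but your summary sentence states the correct conclusion.)

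Two steps need tightening. First, in case~(b) you assert $\theta=P'/P\in\Cset(x)$ and $\phi^2\in\Cset(x)$; the symmetric-square equation alone does not give this, since it only shows $P$ satisfies an ODE over $\Cset(x)$. The missing step is Galois invariance: diagonal elements of $D^\dagger$ fix $y_1y_2$ while antidiagonal ones send it to $-y_1y_2$, so $P^2$ is $G$-invariant and hence rational, whence $\theta=\tfrac12(P^2)'/P^2$ and $\phi^2=W^2/P^2$ lie in $\Cset(x)$. Second, in case~(c) the inference ``the local monodromy has finite order, so no irregular singular point is allowed'' is invalid as stated --- irregular singular points can have finite, even trivial, monodromy. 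What does the work is the other half of your sentence: finiteness of $G$ makes the Picard--Vessiot extension finite algebraic, algebraic functions have moderate growth in sectors, and moderate growth of all solutions characterizes regular singular points; Fuchs' criterion then gives pole order at most two at finite points and, after $x=1/t$, order at least two at $\infty$, exactly as you compute. With these repairs the proof is complete and establishes precisely the (weakened) statement in the paper.
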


Note that cases~(a), (b) and (c) of Proposition~\ref{prop:K1} may not occur
 even though conditions~(a), (b) and (c) hold, respectively.
In particular, Proposition~\ref{prop:K2} only states fewer conditions for case~(c)
 than the original version in \cite{K86} since they are enough for our application
 in Section~5.
Kovacic \cite{K86} also gave an algorithm for finding a ``closed-form'' solution
 and detecting whether cases (a)-(c) in Proposition~\ref{prop:K1}
 really occur or not under conditions~(a), (b) and (c) of Proposition~\ref{prop:K1},
 respectively.
In the following
 we describe necessary parts of his algorithm for cases~(a) and (b)
 in the setting required in Section~5:
The order of $r(x)$ at $\infty$ is zero.
See \cite{CH11,K86} for the full version of his algorithm and its proof.

\subsection{Case (a)}
We assume that condition~(a) in Proposition~\ref{prop:K2} holds
 as well as the order of $r(x)$ at $\infty$ is zero.
When $g(x)$ has a pole of order $\ell\ge 2$ at $x=s$ and is expanded in a Laurent series
\[
g(x)=\sum_{j\ge-\ell}g_j(x-s)^j
\]
there, we write
\[
[g(x)]_s=\sum_{-\ell\le j\le -2}g_j(x-s)^j.
\]
Similarly, when $g(x)$ is of order $\ell$ at $\infty$ and is expanded in a Laurent series
\[
g(x)=\sum_{j\le\ell}g_jx^j
\]
there, we write
\[
[g(x)]_\infty=\sum_{0\le j\le\ell}g_jx^j.
\]
For $s\in\S\cup\{\infty\}$
 we define a rational function $\omega_s(x)$ and two constants $\kappa_s^\pm$ as follows:
\begin{enumerate}
\setlength{\leftskip}{-1.2em}
\setlength{\labelsep}{1em}
\item[($s_1$)]
If $s\in\S$ is of order $1$, then $\kappa_s^\pm=1$;
\item[($s_2$)]
If $s\in\S$ is of order $2$, then 
\[
\kappa_s^\pm=\tfrac{1}{2}\pm\tfrac{1}{2}\sqrt{1+4\beta},
\]
where $\beta$ is the coefficient of $1/(x-s)^2$ in the partial fraction expansion of $r(x);$
\item[($s_3$)]
If $s\in\S$ is of order $2\ell>2$, then
\[
\kappa_s^\pm=\tfrac{1}{2}\left(\pm\frac{\beta}{\alpha}+\ell\right),
\]
where $\alpha$ and $\beta$ are the coefficients of $(x-s)^{-\ell}$ and $(x-s)^{-(\ell+1)}$
 in the Laurent series expansions of $\sqrt{r(x)}$ and $r(x)-([\sqrt{r(x)}]_s)^2$, respectively;
\end{enumerate}
\begin{enumerate}
\setlength{\leftskip}{-0.7em}
\setlength{\labelsep}{1em}
\item[($\infty_1$)]
$\omega_\infty(x)=[\sqrt{r(x)}]_\infty$ and
\[
\kappa_\infty^\pm=\pm\frac{\beta}{2\alpha},
\]
where $\alpha$ and $\beta$ are the coefficients of $x^0$ and $x^{-1}$
 in the Laurent series expansions of $\sqrt{r(x)}$ and $r(x)$, respectively.
\end{enumerate}

For each sequence $c=\{c(s)\}_{s\in\S\cup\{\infty\}}$ with $c(s)=+$ or $-$, define
\begin{equation}
d_c=\kappa_\infty^{c(\infty)}-\sum_{s\in\S}\kappa_s^{c(s)}.
\label{eqn:dc}
\end{equation}
If $d_c$ is not a non-negative integer for any sequence $c$,
 then case~(a) of Proposition~\ref{prop:K1} does not occur.
We also have a further recipe to determine
 whether it occurs or not when $d_c$ is a non-negative integer,
 although it is not given here since it is not necessary for our purpose.
The reader who is interested in it should consult \cite{CH11,K86}.

\subsection{Case (b)}
We assume that condition~(b) in Proposition~\ref{prop:K2} holds
 as well as the order of $r(x)$ at $\infty$ is zero.
For $s\in\S\cup\{\infty\}$ we define a set $E_s\subset\Zset$ as follows:
\begin{enumerate}
\setlength{\leftskip}{-1.2em}
\setlength{\labelsep}{1em}
\item[($s_1$)]
If $s\in\S$ is of order $1$, then $E_s=\{4\}$;
\item[($s_2$)]
If $s\in\S$ is of order $2$, then
\[
E_s=\{2+l\sqrt{1+4\beta}\mid l=0,\pm 2\}\cap\Zset,
\]
where $\beta$ is the coefficient of $1/(x-s)^2$ in the partial fraction expansion of $r\,;$
\item[($s_3$)]
If $s\in\S$ is of order $j>2$, then $E_s=\{j\}$.
\end{enumerate}

For each tuple $e=\{e_s\in E_s\mid s\in\S\}$, let
\begin{equation}
d_e=-\tfrac{1}{2}\sum_{s\in\S}e_s.
\label{eqn:de}
\end{equation}
If $d_e$ is not a non-negative integer for any $e$, 
 then case~(b) of Proposition~\ref{prop:K1} does not occur.

Let $E=\{e\mid d_e\in\Nset_0\}$ and let
\begin{equation}
\theta(x)=\tfrac{1}{2}\sum_{s\in\S}\frac{e_s}{x-s}
\label{eqn:theta1}
\end{equation}
for $e\in E$.
By \eqref{eqn:de} and \eqref{eqn:theta1} we see that
 the Laurent series of $\theta(x)$ at $\infty$ is given by
\[
\theta(x)=-\frac{d_e}{x}+\cdots.
\]
If there exists a monic polynomial $P(x)$ of degree $d_e$ satisfying
\begin{align}
&P'''+3\theta(x)P''+(3\theta(x)^2+3\theta'(x)-4r(x))P'\notag\\
&
+(\theta''(x)+3\theta(x)\theta'(x)+\theta(x)^3-4r(x)\theta(x)-2r'(x))P=0,
\label{eqn:P}
\end{align}
then case~(b) of Proposition~\ref{prop:K1} occurs and
\begin{equation}
v(x)=\exp\left(\int\tfrac{1}{2}\left(-\hat{\theta}(x)
  \pm\sqrt{4r(x)-\hat{\theta}(x)^2-2\hat{\theta}'(x)}\right)\d x\right)
\label{eqn:solreq}
\end{equation}
are solutions to \eqref{eqn:req},
 where $\hat{\theta}(x)=\theta(x)+P'(x)/P(x)$.
If such a polynomial is not found for any sequence $e$,
 then it does not occur.
 

\end{document}